\def\shorttitle{Inverse Source Problem for $($Time-Fractional$)$ Diffusion Equations}
\def\shortauthor{Y. Liu and Z. Zhang}
\newfont{\myfnt}{cmssi10 scaled 1440}
\numberwithin{equation}{section}
\def\ps@nk{\def\@oddhead{\vbox{\hbox to \hsize{\pic \footnotesize \it \shorttitle
\hfill \rm \thepage} \vspace{1mm} \vspace*{-2mm}}}
\def\@evenhead{\vbox{\hbox to \hsize{\pic \footnotesize \rm \thepage \hfill \it \shortauthor}
\vspace{1mm} \vspace*{-2mm}}}
\def\@oddfoot{} \def\@evenfoot{}}
\def\ps@first{\def\@oddhead{\vbox{\hbox to \hsize{\pic \footnotesize
} \break}}
\def\@oddfoot{} \def\@evenfoot{}}
\newtheoremstyle{thmstyle}
  {6pt}
  {6pt}
  {\it}
  {}
  {\bf}
  {}
  {.5em}
  {}
\newtheoremstyle{remstyle}
  {6pt}
  {6pt}
  {\rm}
  {}
  {\bf}
  {}
  {.5em}
  {}
\def\Section#1{\Sec{\large #1} \setcounter{equation}{0} \vskip -6mm \indent}
\def\Sec{\@Startsection{section}{1}{\z@}
                                   {-3.5ex \@plus -1ex \@minus -.2ex}%
                                   {2.3ex \@plus.2ex}%
                                   {\normalfont\large\bfseries\boldmath}}
\def\@Startsection#1#2#3#4#5#6{%
  \if@noskipsec \leavevmode \fi
  \par
  \@tempskipa #4\relax
  \@afterindenttrue
  \ifdim \@tempskipa <\z@
    \@tempskipa -\@tempskipa \@afterindentfalse
  \fi
  \if@nobreak
    \everypar{}%
  \else
    \addpenalty\@secpenalty\addvspace\@tempskipa
  \fi
  \@ifstar
    {\@ssect{#3}{#4}{#5}{#6}}%
    {\@dblarg{\@Sect{#1}{#2}{#3}{#4}{#5}{#6}}}}
\def\@Sect#1#2#3#4#5#6[#7]#8{%
  \ifnum #2>\c@secnumdepth
    \let\@svsec\@empty
  \else
    \refstepcounter{#1}%
    \protected@edef\@svsec{\@seccntformat{#1}\relax}%
  \fi
  \@tempskipa #5\relax
  \ifdim \@tempskipa>\z@
    \begingroup
      #6{%
          \@hangfrom{\hskip #3\relax\@svsec \hskip -2.5mm}%
          \interlinepenalty \@M #8\@@par}
    \endgroup
    \csname #1mark\endcsname{#7}%
    \addcontentsline{toc}{#1}{%
      \ifnum #2>\c@secnumdepth \else
        \protect\numberline{\csname the#1\endcsname}%
      \fi
      #7}%
  \else
    \def\@svsechd{%
      #6{\hskip #3\relax
      \@svsec #8}%
      \csname #1mark\endcsname{#7}%
      \addcontentsline{toc}{#1}{%
        \ifnum #2>\c@secnumdepth \else
          \protect\numberline{\csname the#1\endcsname}%
        \fi
        #7}}%
  \fi
  \@xsect{#5}}
\renewenvironment{abstract}{%
        \small
        \quotation
         \noindent {\bfseries \abstractname } }%
      {\if@twocolumn\else\endquotation\fi}
\def\Subsec{\@StartSubsection{subsection}{2}{\z@}%
                                     {-3.25ex\@plus -1ex \@minus -.2ex}%
                                     {1.5ex \@plus .2ex}%
                                     {\normalfont\normalsize\bfseries\boldmath}}
\def\@StartSubsection#1#2#3#4#5#6{%
  \if@noskipsec \leavevmode \fi
  \par
  \@tempskipa #4\relax
  \@afterindenttrue
  \ifdim \@tempskipa <\z@
    \@tempskipa -\@tempskipa \@afterindentfalse
  \fi
  \if@nobreak
    \everypar{}%
  \else
    \addpenalty\@secpenalty\addvspace\@tempskipa
  \fi
  \@ifstar
    {\@ssect{#3}{#4}{#5}{#6}}%
    {\@dblarg{\@SubSect{#1}{#2}{#3}{#4}{#5}{#6}}}}
\def\@SubSect#1#2#3#4#5#6[#7]#8{%
  \ifnum #2>\c@secnumdepth
    \let\@svsec\@empty
  \else
    \refstepcounter{#1}%
    \protected@edef\@svsec{\@seccntformat{#1}\relax}%
  \fi
  \@tempskipa #5\relax
  \ifdim \@tempskipa>\z@
    \begingroup
      #6{%
          \@hangfrom{\hskip #3\relax\@svsec\hskip -1.5mm}%
          \interlinepenalty \@M #8\@@par}
    \endgroup
    \csname #1mark\endcsname{#7}%
    \addcontentsline{toc}{#1}{%
      \ifnum #2>\c@secnumdepth \else
        \protect\numberline{\csname the#1\endcsname}%
      \fi
      #7}%
  \else
    \def\@svsechd{%
      #6{\hskip #3\relax
      \@svsec #8}%
      \csname #1mark\endcsname{#7}%
      \addcontentsline{toc}{#1}{%
        \ifnum #2>\c@secnumdepth \else
          \protect\numberline{\csname the#1\endcsname}%
        \fi
        #7}}%
  \fi
  \@xsect{#5}}
\def\list#1#2{\ifnum \@listdepth >5\relax \@toodeep \else \global
\advance \@listdepth\@ne \fi \rightmargin \z@ \listparindent\z@
\itemindent\z@ \csname @list\romannumeral\the\@listdepth\endcsname
\def\@itemlabel{#1}\let\makelabel\@mklab \@nmbrlistfalse #2\relax
\@trivlist \parskip 0pt \parindent\listparindent \advance \linewidth
-\rightmargin \advance\linewidth -\leftmargin \advance\@totalleftmargin
\leftmargin \parshape \@ne \@totalleftmargin \linewidth \ignorespaces}
\renewcommand{\@makecaption}[2]{\begin{center}#1. #2\end{center}}
\theoremstyle{thmstyle}
\newtheorem{thm}{\indent Theorem}[section]
\newtheorem{lem}[thm]{\indent Lemma}
\newtheorem{prob}[thm]{\indent Problem}
\theoremstyle{remstyle}
\newtheorem{algo}[thm]{\indent Algorithm}
\newsavebox{\mygraphic}
\def\pic{\begin{picture}(0,0) \put(-210,-1250){\usebox{\mygraphic}} \end{picture}}
\newfont{\HUGEbf}{cmbx10 scaled 3500}
\definecolor{gray}{rgb}{0.9,0.9,0.9}
\def\thebibliography#1{\section*{\bf \large References}
\list{[\arabic{enumi}]} {\settowidth \labelwidth{[#1]} \leftmargin
\labelwidth \advance \leftmargin \labelsep \usecounter{enumi}}
\def\newblock{\hskip .11em plus .33em minus .07em} \footnotesize \sloppy \clubpenalty
4000 \widowpenalty 4000 \sfcode`\.=1000 \relax}
\def\BC{\mathbb C}
\def\BR{\mathbb R}
\def\cD{\mathcal D}
\def\cU{\mathcal U}
\def\rd{\mathrm d}
\def\e{\mathrm e}
\def\supp{\mathrm{supp}}
\def\Ga{\Gamma}
\def\Om{\Omega}
\def\al{\alpha}
\def\be{\beta}
\def\ga{\gamma}
\def\de{\delta}
\def\ep{\epsilon}
\def\ve{\varepsilon}
\def\ze{\zeta}
\def\ka{\kappa}
\def\la{\lambda}
\def\si{\sigma}
\def\vp{\varphi}
\def\om{\omega}
\def\f{\frac}
\def\nb{\nabla}
\def\ov{\overline}
\def\pa{\partial}
\def\wt{\widetilde}
\def\tri{\triangle}
\def\beqnx{\begin{eqnarray*}} \def\eqnx{\end{eqnarray*}}
\theoremstyle{definition}
\numberwithin{equation}{section}
\title{\Large\bf\boldmath Reconstruction of the Temporal Component in the Source Term of a (Time-Fractional) Diffusion Equation}
\author{\large Yikan LIU$^*$\qquad Zhidong ZHANG$^\dag$}
\date{}
\begin{document}

\maketitle

\thispagestyle{first}
\renewcommand{\thefootnote}{\fnsymbol{footnote}}

\footnotetext{\hspace*{-5mm} \begin{tabular}{@{}r@{}p{14cm}@{}} &
Manuscript last updated: \today.\\
$^*$ & Graduate School of Mathematical Sciences, The University of Tokyo, 3-8-1 Komaba, Meguro-ku, Tokyo 153-8914, Japan. E-mail: ykliu@ms.u-tokyo.ac.jp\\
$^\dag$ & Corresponding author. Department of Mathematics, Texas A\&M University, College Station, TX 77843, USA. E-mail: zhidong@math.tamu.edu
\end{tabular}}

\renewcommand{\thefootnote}{\arabic{footnote}}

\begin{abstract}
In this article, we consider the reconstruction of $\rho(t)$ in the (time-fractional) diffusion equation $(\pa_t^\al-\tri)u(x,t)=\rho(t)g(x)$ ($0<\al\le1$) by the observation at a single point $x_0$. We are mainly concerned with the situation of $x_0\not\in\supp\,g$, which is practically important but has not been well investigated in literature. Assuming the finite sign changes of $\rho$ and an extra observation interval, we establish the multiple logarithmic stability for the problem based on a reverse convolution inequality and a lower estimate for positive solutions. Meanwhile, we develop a fixed point iteration for the numerical reconstruction and prove its convergence. The performance of the proposed method is illustrated by several numerical examples.

\vskip 4.5mm

\noindent\begin{tabular}{@{}l@{ }p{10cm}} {\bf Keywords } &
Fractional diffusion equation, Inverse source problem,\\
& Multiple logarithmic stability, Reverse convolution inequality,\\
& Fixed point iteration
\end{tabular}

\vskip 4.5mm

\noindent{\bf AMS Subject Classifications } 35R11, 35R30, 26A33, 26D10, 65M32

\end{abstract}

\baselineskip 14pt

\setlength{\parindent}{1.5em}

\setcounter{section}{0}

\Section{Introduction}\label{sec-intro}

Let $0<\al\le1$ and $\pa_t^\al$ denote the Caputo derivative defined as
\begin{equation}\label{eq-def-Caputo}
\pa_t^\al f(t):=\left\{\!\begin{alignedat}{2}
& \f1{\Ga(1-\al)}\int_0^t\f{f'(s)}{(t-s)^\al}\,\rd s, & \quad & 0<\al<1,\\
& f'(t), & \quad & \al=1,
\end{alignedat}\right.
\end{equation}
where $\Ga(\,\cdot\,)$ is the usual Gamma function. Let $d=1,2,3$ be the spatial dimensions and $\Om\subset\BR^d$ be an open bounded domain with a smooth boundary (e.g., of $C^2$-class). In this paper, we consider the Cauchy problem for a (time-fractional) diffusion equation
\begin{equation}\label{eq-ivp-u}
\begin{cases}
(\pa_t^\al-\tri)u(x,t)=\rho(t)g(x), & x\in\BR^d,\ 0<t<T,\\
u(x,0)=0, & x\in\BR^d
\end{cases}
\end{equation}
as well as the corresponding initial-boundary value problem
\begin{equation}\label{eq-ibvp-u}
\begin{cases}
(\pa_t^\al-\tri)u(x,t)=\rho(t)g(x), & x\in\Om,\ 0<t<T,\\
u(x,0)=0, & x\in\Om,\\
u(x,t)=0, & x\in\pa\Om,\ 0<t<T.
\end{cases}
\end{equation}
Here $\tri:=\sum_{j=1}^d\pa_{x_j}^2$ denotes the Laplacian in space. The conditions on the temporal component $\rho$ and the spatial component $g$ in the source term will be specified later.

With $\al=1$, the governing equation in \eqref{eq-ivp-u} and \eqref{eq-ibvp-u} is well known as an orthodox model for diffusion and thermal conduction processes. Since the 1990s, its fractional counterpart with $0<\al<1$ has witnessed its significance as a candidate for modeling the anomalous diffusion phenomena in heterogeneous media (see, e.g., \cite{GCR92,HH98}). From then on, the time-fractional parabolic operator $\pa_t^\al-\tri$ and its further variants have gathered increasing attentions within mathematicians. Here we do not intend to provide a complete list of bibliographies, but just refer to \cite{EK04,L09,SY11a,GLY15,LN16} which reveal the fundamental properties of time-fractional diffusion equations. It turns out that the fractional case resembles its integer prototype to a certain extent, whereas diverges remarkably in such senses as weaker smoothing effect and unique continuation property. Nevertheless, concerning the strong maximum principle, the strict positivity of the solution with realistic initial data is shared for any $0<\al\le1$ according to \cite{LRY16,JPY17}. As a direct application of the established maximum principle, the uniqueness of the following inverse source problem was also immediately proved.

\begin{prob}\label{prob-isp}
Let $u_\rho$ be the solution to \eqref{eq-ivp-u} or \eqref{eq-ibvp-u}, and fix $x_0\in\BR^d$ or $x_0\in\Om$ arbitrarily. Provided that the spatial component $g(x)$ in the source term is known, determine the temporal component $\rho(t)$ by the single point observation data $u_\rho(x_0,t)\ (0<t<T)$.
\end{prob}

Although \cite{LRY16,JPY17} only dealt with the initial-boundary value problem \eqref{eq-ibvp-u} with $0<\al<1$, the same argument simply works for all other cases, and we omit the details here. As natural sequels after the uniqueness, we are interested in the theoretical stability as well as the numerical treatment of Problem \ref{prob-isp}, which are the main focuses of this paper.

As was explained in \cite{LRY16,L17}, the formulation of Problem \ref{prob-isp} stands for the identification of the time evolution pattern $\rho(t)$ of the (anomalous) diffusion by the observation data taken at a single monitoring point $x_0$. The time-independent component $g(x)$ of the source term models the spatial distribution of the contaminant source, which is assumed to be given. Such a problem setting is applicable e.g.\! for the Chernobyl disaster and the Fukushima Daiichi nuclear disaster, in which the location of the source is exactly known, but the decay of the radiative strength in time is unknown. In these cases, the contaminant source is harmful and it is extremely dangerous or even impossible to measure inside the support of $g$. Therefore, a more desirable situation in practice is to carry out the observation far away from the location of the source, that is, $x_0\not\in\supp\,g$.

Unfortunately, the above practical setting causes considerable difficulties in mathematical analysis, and the related literature is rather limited even in the case of $\al=1$. Actually, if $x_0\in\supp\,g$ or simply $g(x_0)\ne0$, one can reduce Problem \ref{prob-isp} into a Volterra equation of the second kind, by which the unique solvability follows immediately (see \cite[\S 1.5]{POV00}). In the case of $x_0\not\in\supp\,g$, Cannon and P\'erez Esteva \cite{CP86} proved the following logarithmic stability
\[\|\rho\|_{C[0,T]}\le C\left|\log\|u_\rho(x_0,\,\cdot\,)\|_{L^2(0,\infty)}\right|^{-2}\]
for $\|u_\rho(x_0,\,\cdot\,)\|_{L^2(0,\infty)}<1$ and $d=1,2$, where the known component $g$ was restricted as the characterization function $\chi_\om$ of a subdomain $\om\subset\Om$. However, the data should be taken in the unrealistic $(0,\infty)$ interval. By assuming the finite sign changes of $\rho$, Saitoh, Tuan and Yamamoto \cite{STY02} asserted the H\"older stability for Problem \ref{prob-isp} with a small extra interval $(T,T+\de)$ of observation data. However, there seems to be several fatal flaws in the arguments in \cite{STY02}, so that the essential ill-posedness of the problem was much underestimated. Following the same line, in this paper we attempt to fill the gaps in \cite{STY02} and generalize the result to the fractional case. Regarding other inverse problems on determining time-dependent coefficients in parabolic equations, we refer to \cite{C84,CK13}.

Concerning inverse problems for time-fractional diffusion equations, publications for time-dependent unknown coefficients are rather minor compared with that for space-dependent ones. On Problem \ref{prob-isp}, Sakamoto and Yamamoto \cite{SY11a} proved
\begin{equation}\label{eq-stab-0}
\|\rho\|_{C[0,T]}\le C\|\pa_t^\al u_\rho(x_0,\,\cdot\,)\|_{C[0,T]}
\end{equation}
by assuming $g(x_0)\ne0$. Allowing $g=g(x,t)$ with $|g(x_0,\,\cdot\,)|\ge g_0$ a.e.\! in $(0,T)$ for a constant $g_0>0$, Fujishiro and Kian \cite{FK16} obtained the similar Lipschitz stability as \eqref{eq-stab-0} in the $L^p(0,T)$ norm. Recently, a numerical reconstruction method for $\rho$ was proposed in \cite{WLL16} by the partial boundary measurements. It turns out that all the above mentioned results rely heavily on some non-vanishing assumptions such as $x_0\in\supp\,g$, and the requirements on the data regularity were quite strong. In the case of $x_0\not\in\supp\,g$, only \cite{LRY16,L17} showed the uniqueness by the strong maximum principle, but there seems no published results on the stability and the numerical inversion to the best knowledge of the authors.

The present paper is mainly motivated by the settings in \cite{STY02}, namely, the assumption on finite sign changes of $\rho$ and an extra interval of observation data. As a correction to the flaws therein, we prove a new reverse convolution inequality (see Lemma \ref{lem-rci}), which is the key to the stability. Then we employ the fundamental solutions of (time-fractional) diffusion equations to give lower estimates of positive solutions to homogeneous problems, which enables us to establish the extremely weak stability of multiple logarithmic type (see Theorem \ref{thm-stab-log}). Numerically, we develop a fixed point iteration for the numerical reconstruction, and provide a convergence result.

The rest of this paper is organized as follows. Recalling some facts in fractional calculus and (time-fractional) diffusion equations, in Section \ref{sec-pre} we state the main stability results in Theorems \ref{thm-stab} and \ref{thm-stab-log} concerning Problem \ref{prob-isp}. Then Section \ref{sec-proof} is devoted to the proofs of a key lemma and the above theorems. In Section \ref{sec-recon}, we propose an iteration method \eqref{eq-itr} for the numerical treatment of Problem \ref{prob-isp} and prove Theorem \ref{thm-cov} for its convergence. We implement several numerical examples in Section \ref{sec-numer} to illustrate the performance of the proposed method, and summarize the paper in Section \ref{sec-concl} with some concluding remarks.

\Section{Preliminaries and Main Results}\label{sec-pre}

We start from the introduction of general notations and terminologies. First, we introduce the usual $L^2(\Om)$ space with the inner product $(\,\cdot\,,\,\cdot\,)$ and the Sobolev spaces $H_0^1(\Om)$, $H^2(\Om)$, etc.. Next, let $\{(\la_n,\vp_n)\}_{n=1}^\infty$ be the eigensystem of the elliptic operator $-\tri:H^2(\Om)\cap H_0^1(\Om)\longrightarrow L^2(\Om)$ such that $0<\la_1<\la_2\le\cdots$, $\la_n\uparrow\infty$ ($n\to\infty$) and $\{\vp_n\}\subset H^2(\Om)\cap H_0^1(\Om)$ forms a complete orthonormal basis of $L^2(\Om)$. Then the fractional power $(-\tri)^\ga$ can be further induced for $\ga\ge0$ as (see, e.g., \cite{P83})
\[\cD((-\tri)^\ga)=\left\{f\in L^2(\Om);\,\sum_{n=1}^\infty|\la_n^\ga(f,\vp_n)|^2<\infty\right\},\quad(-\tri)^\ga f:=\sum_{n=1}^\infty\la_n^\ga(f,\vp_n)\vp_n,\]
and $\cD((-\tri)^\ga)$ is a Hilbert space equipped with the norm
\[\|f\|_{\cD((-\tri)^\ga)}=\left(\sum_{n=1}^\infty|\la_n^\ga(f,\vp_n)|^2\right)^{\f12}.\]
Further, we know $\cD((-\tri)^\ga)\subset H^{2\ga}(\Om)$ for $\ga>0$ and especially $\cD((-\tri)^{\f12})=H_0^1(\Om)$.

For later use, we recall the Riemann-Liouville integral operator
\[J^\be f(t):=\left\{\!\begin{alignedat}{2}
& f(t), & \quad & \be=0,\\
& \f1{\Ga(\be)}\int_0^t\f{f(s)}{(t-s)^{1-\be}}\,\rd s, & \quad & 0<\be\le1,
\end{alignedat}\right.\quad f\in C[0,\infty).\]
Then by definition, the Caputo derivative $\pa_t^\be$ can be rewritten as $\pa_t^\be f=J^{1-\be}(f')$ for $0<\be\le1$. Further, for $f\in C^1[0,\infty)$ the Riemann-Liouville derivative is defined by
\begin{equation}\label{eq-def-RL}
D_t^\be f(t):=(J^{1-\be}f)'(t)=\left\{\!\begin{alignedat}{2}
& \f1{\Ga(1-\be)}\f\rd{\rd t}\int_0^t\f{f(s)}{(t-s)^\be}\rd s, & \quad & 0\le\be<1,\\
& f'(t), & \quad & \be=1.
\end{alignedat}\right.
\end{equation}
The following lemma collects basic properties of fractional derivatives, which can be simply verified by direct calculations.

\begin{lem}\label{lem-Caputo-RL}
Let $0<\be<1$ and $f\in C^1[0,\infty)$. For the Caputo derivative $\pa_t^\be$ and the Riemann-Liouville derivative $D_t^\be$ defined in \eqref{eq-def-Caputo} and \eqref{eq-def-RL} respectively, the following formulae hold:
\[D_t^\be f(t)=\f{f(0)}{\Ga(1-\be)}t^{-\be}+\pa_t^\be f(t),\quad f=J^\be(D_t^\be f),\quad f'=D_t^{1-\be}(D_t^\be f).\]
Especially, $D_t^\be f=\pa_t^\be f$ if $f(0)=0$. Moreover, if $f(0)=\pa_t^\be f(0)=0$, then
\[\pa_t^{\be'}\pa_t^\be f=\pa_t^{\be+\be'}f\quad\mbox{for }0\le\be'\le1-\be.\]
\end{lem}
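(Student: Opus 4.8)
The plan is to prove the four identities in order, each time reducing everything to the two defining relations $\pa_t^\be f=J^{1-\be}(f')$ and $D_t^\be f=(J^{1-\be}f)'$ together with three elementary tools: the semigroup law $J^{\ga_1}J^{\ga_2}=J^{\ga_1+\ga_2}$ for the Riemann--Liouville integral (a Beta-function identity, itself proved by Fubini), the fundamental theorem of calculus in the forms $(J^1 h)'=h$ and $J^1(h')=h-h(0)$, and the Beta integral $\int_0^1\tau^{\be-1}(1-\tau)^{-\be}\,\rd\tau=\Ga(\be)\Ga(1-\be)$.

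For the first formula I would split $f(t)=f(0)+(f(t)-f(0))$ and treat the two pieces separately. For the constant $c:=f(0)$ a one-line computation gives $J^{1-\be}c(t)=\f{c}{\Ga(2-\be)}t^{1-\be}$, hence $D_t^\be c=\f{c}{\Ga(1-\be)}t^{-\be}$, while $\pa_t^\be c=0$. For the remainder $g:=f-f(0)$, which satisfies $g(0)=0$ and $g'=f'$, I would integrate by parts in $J^{1-\be}g$; the boundary term at $s=0$ vanishes because $g(0)=0$ and the one at $s=t$ vanishes because $1-\be>0$, so that after differentiating under the integral sign one obtains $D_t^\be g=J^{1-\be}(g')=\pa_t^\be g=\pa_t^\be f$. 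Adding the two contributions yields the first identity, and its special case $f(0)=0$ gives $D_t^\be f=\pa_t^\be f$.

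Applying $J^\be$ to the first identity settles the second: the term $J^\be J^{1-\be}(f')=J^1(f')=f-f(0)$ by the semigroup law, while $J^\be(t^{-\be})(t)=\f1{\Ga(\be)}\int_0^t(t-s)^{\be-1}s^{-\be}\,\rd s=\Ga(1-\be)$ by the Beta integral, so the singular term contributes exactly $f(0)$ and the sum collapses to $f$. The third identity is then immediate, since $D_t^{1-\be}h=(J^\be h)'$ gives $D_t^{1-\be}(D_t^\be f)=(J^\be D_t^\be f)'=f'$.

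The semigroup rule for the Caputo derivative is the most delicate point and where I expect the main care to be needed. Under the hypotheses $f(0)=\pa_t^\be f(0)=0$, the special case just proved lets me replace the outer Caputo derivative by the Riemann--Liouville one, $\pa_t^{\be'}(\pa_t^\be f)=D_t^{\be'}(\pa_t^\be f)=(J^{1-\be'}J^{1-\be}f')'=(J^{2-\be-\be'}f')'$; writing $J^{2-\be-\be'}=J^1J^{1-\be-\be'}$ (legitimate since $\be+\be'\le1$) and using $(J^1 h)'=h$ reduces this to $J^{1-\be-\be'}(f')=\pa_t^{\be+\be'}f$. Throughout, the manipulations with the singular kernels --- differentiation under the integral, integration by parts, and the replacement of $\pa_t^{\be'}$ by $D_t^{\be'}$ --- are exactly what the hypotheses $f\in C^1[0,\infty)$, $f(0)=0$ and $\pa_t^\be f(0)=0$ are designed to justify, so the bookkeeping of the vanishing boundary terms is the only genuine obstacle.
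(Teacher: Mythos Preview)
Your proposal is correct and is precisely the ``direct calculation'' the paper alludes to; the paper itself offers no proof beyond the remark that the identities ``can be simply verified by direct calculations.'' Your decomposition $f=f(0)+(f-f(0))$ for the first identity, the Beta-integral evaluation $J^\be(t^{-\be})=\Ga(1-\be)$ for the second, and the use of $J^{2-\be-\be'}=J^1J^{1-\be-\be'}$ together with $(J^1h)'=h$ for the Caputo semigroup rule are all the natural steps, and the vanishing boundary terms are handled correctly by the stated hypotheses.
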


\begin{lem}[Fractional Duhamel's principle]\label{lem-Duhamel}
Let $0<\al\le1$ and assume $\rho\in C^1[0,\infty)$.

{\rm(a)} Let $u_\rho$ be the solution to \eqref{eq-ivp-u}, where we assume
\begin{equation}\label{eq-asp-g1}
\begin{cases}
g:\mbox{bounded continuous} & \mbox{if }d=1,\\
g:\mbox{locally H\"older continuous} &  \mbox{if }d=2,3.
\end{cases}
\end{equation}
Then $u_\rho$ allows the representation
\begin{equation}\label{eq-Duhamel}
u_\rho(x,t)=\int_0^tD_s^{1-\al}\rho(s)\,v(x,t-s)\,\rd s,\quad t>0,
\end{equation}
where $v$ solves the homogeneous problem
\begin{equation}\label{eq-ivp-v}
\begin{cases}
(\pa_t^\al-\tri)v=0 & \mbox{in }\BR^d\times(0,\infty),\\
v=g & \mbox{in }\BR^d\times\{0\}.
\end{cases}
\end{equation}

{\rm(b)} Let $u_\rho$ be the solution to \eqref{eq-ibvp-u}, where we assume
\begin{equation}\label{eq-asp-g2}
g\in\cD((-\tri)^\ve)\mbox{ with some }\ve>0.
\end{equation}
Then $u_\rho$ also allows the representation \eqref{eq-Duhamel}, where $v$ solves the homogeneous problem
\begin{equation}\label{eq-ibvp-v}
\begin{cases}
(\pa_t^\al-\tri)v=0 & \mbox{in }\Om\times(0,\infty),\\
v=g & \mbox{in }\Om\times\{0\},\\
v=0 & \mbox{on }\pa\Om\times(0,\infty).
\end{cases}
\end{equation}
\end{lem}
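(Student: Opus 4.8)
The plan is to set $w(x,t):=\int_0^t D_s^{1-\al}\rho(s)\,v(x,t-s)\,\rd s$, the claimed right-hand side of \eqref{eq-Duhamel}, and to show that $w$ solves the same inhomogeneous problem as $u_\rho$ (namely \eqref{eq-ivp-u} in case (a) and \eqref{eq-ibvp-u} in case (b)); the representation then follows at once from the uniqueness of the solution. Observing that $w=(D_t^{1-\al}\rho)*v$ is a temporal convolution, I would perform the verification by the Laplace transform in $t$, which converts the convolution into a product and each fractional derivative into a power of the transform variable $p$.

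The scalar fact driving everything is the transform rule $\mathcal L[D_t^{1-\al}\rho](p)=p^{1-\al}\wh\rho(p)$. This follows from $D_t^{1-\al}\rho=(J^\al\rho)'$ together with $\mathcal L[J^\al\rho](p)=p^{-\al}\wh\rho(p)$, once we note that the boundary term $(J^\al\rho)(0^+)$ vanishes because $\rho\in C^1[0,\infty)$ is bounded near $0$ (indeed $(J^\al\rho)(t)=O(t^\al)$). For the bounded-domain case (b) I would expand in the eigensystem $\{(\la_n,\vp_n)\}$: the homogeneous solution of \eqref{eq-ibvp-v} is $v(x,t)=\sum_n(g,\vp_n)E_{\al,1}(-\la_n t^\al)\vp_n(x)$, while the genuine solution of \eqref{eq-ibvp-u} is the standard series $u_\rho(x,t)=\sum_n(g,\vp_n)\big[\int_0^t\rho(s)(t-s)^{\al-1}E_{\al,\al}(-\la_n(t-s)^\al)\,\rd s\big]\vp_n(x)$. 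Matching this against $w$ reduces the claim to the scalar convolution identity
\[\int_0^t\rho(s)(t-s)^{\al-1}E_{\al,\al}(-\la(t-s)^\al)\,\rd s=\int_0^t D_s^{1-\al}\rho(s)\,E_{\al,1}(-\la(t-s)^\al)\,\rd s\]
for each $\la=\la_n>0$, which I would verify by Laplace transform: using $\mathcal L[t^{\al-1}E_{\al,\al}(-\la t^\al)](p)=(p^\al+\la)^{-1}$, $\mathcal L[E_{\al,1}(-\la t^\al)](p)=p^{\al-1}(p^\al+\la)^{-1}$ and the rule above, both sides transform to $\wh\rho(p)/(p^\al+\la)$. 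For the whole-space case (a) the eigenfunction expansion is unavailable, but the spatial Fourier transform plays the same role (equivalently, one uses the fundamental solution of \eqref{eq-ivp-v}): the identical scalar identity holds with $\la$ replaced by $|\xi|^2$, so the verification is structurally the same.

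The main obstacle is not the algebra but its rigorous justification. In case (b) I must ensure that the eigenfunction series for $v$, $u_\rho$ and $w$ converge in a suitable space and that the term-by-term Laplace transform is legitimate; here the assumption \eqref{eq-asp-g2} that $g\in\cD((-\tri)^\ve)$ supplies exactly the extra smoothing needed for absolute convergence. In case (a) the continuous spectrum forces me to work through the Fourier transform or the explicit fundamental solution, and the mild regularity \eqref{eq-asp-g1} on $g$ is precisely what guarantees that $v$ is a bona fide solution of \eqref{eq-ivp-v} and that the defining convolution integral makes sense. A subtlety common to both cases is the integrable singularity of the kernel $D_s^{1-\al}\rho(s)$, which behaves like $\f{\rho(0)}{\Ga(\al)}s^{\al-1}$ as $s\to0$ by Lemma \ref{lem-Caputo-RL}; since $\al>0$ this singularity is integrable, so the convolution and its transform are well defined, but all estimates must be arranged to accommodate it.
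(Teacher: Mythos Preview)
Your proposal is correct. Note, however, that the paper does not actually supply a proof of this lemma: immediately after the statement it simply remarks that for $0<\al<1$ the formula ``automatically results from direct calculations'' and refers to \cite{EK04,LRY16} for the technical conditions on $g$. Your Laplace-transform verification of the scalar identity
\[
\rho * \big(t^{\al-1}E_{\al,\al}(-\la t^\al)\big) \;=\; (D_t^{1-\al}\rho) * E_{\al,1}(-\la t^\al)
\]
is precisely one clean way to carry out those ``direct calculations,'' and your handling of the boundary term $(J^\al\rho)(0^+)=0$ and of the integrable $s^{\al-1}$ singularity in $D_s^{1-\al}\rho$ is on point. The only thing to be slightly careful about is the appeal to uniqueness for the inhomogeneous problem (needed to conclude $w=u_\rho$): in the bounded-domain case this is covered by the Sakamoto--Yamamoto theory the paper already invokes, while in the whole-space case you should cite the uniqueness result accompanying the fundamental-solution construction in \cite{EK04}.
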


For integers $\al=1,2,\ldots$, the above lemma is a fundamental property for evolution equations. For $0<\al<1$, there appears the Riemann-Liouville derivative of the temporal component $\rho$ in the convolution \eqref{eq-Duhamel}, which automatically results from direct calculations. On the other hand, conditions \eqref{eq-asp-g1} and \eqref{eq-asp-g2} on the spatial component $g$ are technical and we refer to \cite{EK04,LRY16} for further details.

Now that the inhomogeneous problems \eqref{eq-ivp-u} and \eqref{eq-ibvp-u} are related to the homogeneous ones \eqref{eq-ivp-v} and \eqref{eq-ibvp-v} respectively via Lemma \ref{lem-Duhamel}, we shall also recall some basic properties of the homogeneous problems for later use. First we start with the whole space case \eqref{eq-ivp-v}.

\begin{lem}[Eidelman and Kochubei \cite{EK04}]\label{lem-ivp}
Let $g$ satisfy condition \eqref{eq-asp-g1}. Then there exists a classical solution to the Cauchy problem \eqref{eq-ivp-v} which takes the form
\begin{equation}\label{eq-ivp-fdm}
v(x,t)=\int_{\BR^d}K_\al(x-x',t)\,g(x')\,\rd x',\quad0<\al\le1,
\end{equation}
where the fundamental solution $K_\al(x,t)$ satisfies the following properties.

{\rm(a)} If $0<|x|^2\le t^\al$, then there exists a constant $c_1>0$ depending on $\al,d$ such that
\begin{equation}\label{eq-Fox-est0}
|K_\al(x,t)|\le\begin{cases}
c_1\,t^{-\f\al2}, & d=1,\\
c_1\,t^{-\al}(|\log(t^{-\al}|x|^2)|+1), & d=2,\\
c_1\,t^{-\al}|x|^{-1}, & d=3.
\end{cases}
\end{equation}

{\rm(b)} If $|x|^2\ge t^\al$, then there exist constants $c_1>0$ and $c_2>0$ depending on $\al,d$ such that
\begin{equation}\label{eq-Fox-est1}
|K_\al(x,t)|\le c_1\,t^{-\f{\al d}2}\exp\left(-c_2\,t^{-\f\al{2-\al}}|x|^{\f2{2-\al}}\right).
\end{equation}

{\rm(c)} If $|x|>r$ for some fixed $r>0$, then there exist constants $c_3>0$ and $c_4>0$ depending on $\al,d,r$ such that the following asymptotic behavior holds for $t\downarrow0$:
\begin{equation}\label{eq-Fox-asp}
K_\al(x,t)\sim c_3\,t^{-\f{\al d}{2(2-\al)}}|x|^{-\f{d(1-\al)}{2-\al}}\exp\left(-c_4\,t^{-\f\al{2-\al}}|x|^{\f2{2-\al}}\right).
\end{equation}
\end{lem}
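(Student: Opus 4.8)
The plan is to realize $K_\al$ explicitly in Fourier space and reduce all three estimates to the asymptotics of a single self-similar profile. Applying the spatial Fourier transform to \eqref{eq-ivp-v} turns the equation into the scalar fractional ODE $\pa_t^\al\wh v+|\xi|^2\wh v=0$ with $\wh v(\xi,0)=\wh g(\xi)$, whose solution is $\wh v(\xi,t)=E_\al(-|\xi|^2t^\al)\,\wh g(\xi)$, where $E_\al(z):=\sum_{k\ge0}z^k/\Ga(\al k+1)$ is the Mittag-Leffler function. Inversion gives at once the convolution \eqref{eq-ivp-fdm} with
\[K_\al(x,t)=(2\pi)^{-d}\int_{\BR^d}\e^{\mathrm{i}x\cdot\xi}\,E_\al(-|\xi|^2t^\al)\,\rd\xi.\]
Substituting $\xi=t^{-\al/2}\eta$ exhibits the parabolic scaling $K_\al(x,t)=t^{-\al d/2}\Phi_\al(xt^{-\al/2})$ for a radial profile $\Phi_\al=\Phi_\al(y)$, so that the regimes $|x|^2\le t^\al$ and $|x|^2\ge t^\al$ of (a) and (b) become $|y|\le1$ and $|y|\ge1$, while (c) is the genuine tail $|y|\to\infty$.

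For the small-argument estimate (a) I would use the subordination identity $K_\al(x,t)=\int_0^\infty G(x,\sigma)\,\rd\mu_t(\sigma)$ expressing $K_\al$ as a Gaussian average $G(x,\sigma)=(4\pi\sigma)^{-d/2}\e^{-|x|^2/(4\sigma)}$ against a probability measure governed by the Mainardi M-Wright function. As $|x|\to0$ the integral is dominated by the range $\sigma\sim|x|^2$ where $G(x,\sigma)\sim\sigma^{-d/2}$, and $\int_{|x|^2}\sigma^{-d/2}\,\rd\sigma$ is bounded for $d=1$, logarithmic for $d=2$, and of order $|x|^{-1}$ for $d=3$, reproducing \eqref{eq-Fox-est0}. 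Equivalently these singularities are read off from the residues of a Mellin--Barnes (Fox $H$-function) representation of $\Phi_\al$, obtained from the Mellin transform $\int_0^\infty s^{z-1}E_\al(-s)\,\rd s=\Ga(z)\Ga(1-z)/\Ga(1-\al z)$.

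The decisive point is the tail. In the same subordination, for large $|y|$ the integrand is $\e^{-|x|^2/(4\sigma)}$ times the M-Wright density, whose large-argument behavior is the stretched exponential $\e^{-c\,\sigma^{1/(1-\al)}}$ in the scaled variable; balancing the two exponents by minimizing $|y|^2/(4\sigma)+c\,\sigma^{1/(1-\al)}$ places the saddle at $\sigma_*\sim|y|^{2(1-\al)/(2-\al)}$ with critical value $\sim|y|^{2/(2-\al)}$, hence the rate $\exp(-c_2\,t^{-\al/(2-\al)}|x|^{2/(2-\al)})$ of \eqref{eq-Fox-est1}. The Laplace-method prefactor $\sigma_*^{-d/2}$ then produces the sharp algebraic factor $|x|^{-d(1-\al)/(2-\al)}$ of \eqref{eq-Fox-asp}, and restoring the $t^{-\al d/2}$ scaling yields the $t$-power $t^{-\al d/(2(2-\al))}$ exactly. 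As a check, $\al=1$ gives exponent $2$ and $\e^{-c|x|^2/t}$, recovering the Gaussian heat kernel.

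I expect the main difficulty to lie not in the exponents but in the rigorous extraction of the sharp prefactors and of uniform, $(\al,d)$-dependent constants. Justifying the saddle-point evaluation — controlling the contour deformation and the error terms in the Fox $H$-function asymptotics so that \eqref{eq-Fox-asp} holds with genuine constants rather than up to unspecified lower-order corrections — is the technically heaviest step and rests on Braaksma's asymptotic theory for $H$-functions. This is precisely where one ultimately relies on the detailed analysis of \cite{EK04}.
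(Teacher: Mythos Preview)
The paper does not prove this lemma: it is quoted verbatim from Eidelman and Kochubei \cite{EK04} and used as a black box, with no argument supplied beyond the citation. There is therefore nothing in the paper to compare your proposal against.

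That said, your outline is a faithful sketch of the route actually taken in \cite{EK04}: the Fourier representation $K_\al=\mathcal F^{-1}[E_\al(-|\xi|^2t^\al)]$, the self-similar scaling $K_\al(x,t)=t^{-\al d/2}\Phi_\al(xt^{-\al/2})$, the identification of $\Phi_\al$ as a Fox $H$-function via its Mellin--Barnes integral, and the extraction of the small- and large-argument behavior from the pole structure and from Braaksma's asymptotic expansion. Your heuristic via subordination against the M-Wright density and a saddle-point balance is a convenient way to predict the exponents in \eqref{eq-Fox-est1}--\eqref{eq-Fox-asp}, but as you yourself note, turning it into a proof with genuine constants and uniform error control is exactly the content of \cite{EK04}. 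For the purposes of the present paper the lemma is simply imported, and your sketch should be read as commentary on that reference rather than as a replacement for a proof the authors chose to give.
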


The fundamental function $K_\al(x,t)$ involves the Fox $H$-function (see \cite{MSH09}), which takes a rather complicated form. For conciseness, we will not scrutinize the $H$-function in detail, but just exploit some of its properties such as \eqref{eq-Fox-est1} and \eqref{eq-Fox-asp}. Moreover, $K_\al(x,t)$ generalizes the usual heat kernel with $\al=1$, i.e.,
\[K_1(x,t)=(4\pi\,t)^{-\f d2}\exp\left(-\f{|x|^2}{4\,t}\right).\]

To express the solution to the initial-boundary value problem \eqref{eq-ibvp-v}, we recall the Mittag-Leffler function defined by
\[E_{\al,\be}(z):=\sum_{k=0}^\infty\f{z^k}{\Ga(\al k+\be)},\quad z\in\BC,\ \al>0,\ \be\in\BR.\]
If $0<\al<2$, then there exists a constant $c_0>0$ depending only on $\al,\be$ such that
\begin{equation}\label{eq-est-ML}
|E_{\al,\be}(-\eta)|\le\f{c_0}{1+\eta},\quad\forall\,\eta\ge0.
\end{equation}
Now we proceed to the bounded domain case \eqref{eq-ibvp-v}.

\begin{lem}\label{lem-ibvp}
{\rm(a)} Let $g\in L^2(\Om)$. Then there exists a unique solution $v\in L^1(0,T;H^2(\Om)\cap H_0^1(\Om))$ to the initial-boundary value problem \eqref{eq-ibvp-v} which takes the form
\begin{equation}\label{eq-ibvp-sol}
v(\,\cdot\,,t)=\sum_{n=1}^\infty E_{\al,1}(-\la_nt^\al)(g,\vp_n)\vp_n.
\end{equation}

{\rm(b)} Especially, for $\al=1$, $v$ also allows the representation
\begin{align}
v(x,t) & =(4\pi\,t)^{-\f d2}\int_\Om\exp\left(-\f{|x-x'|^2}{4\,t}\right)g(x')\,\rd x'\nonumber\\
& \quad\,+\int_0^t(4\pi(t-s))^{-\f d2}\int_{\pa\Om}\exp\left(-\f{|x-y|^2}{4(t-s)}\right)\pa_\nu v(y,s)\,\rd y\rd s,\label{eq-ibvp-fdm}
\end{align}
where $\pa_\nu v:=\nb v\cdot\nu$ is the normal derivative and $\nu=\nu(y)$ denotes the unit outward normal vector at $y\in\pa\Om$.

{\rm(c)} If the spatial dimension $d=1$ and $g\in H_0^1(\Om)$, then $\pa_tv(x_0,\,\cdot\,)\in L^1(0,T)$ for any $x_0\in\Om$.
\end{lem}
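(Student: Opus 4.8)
The plan is the classical eigenfunction-expansion (Fourier) method. I would seek $v(\,\cdot\,,t)=\sum_nv_n(t)\vp_n$ and pair \eqref{eq-ibvp-v} with $\vp_n$; since $-\tri\vp_n=\la_n\vp_n$ and $\vp_n|_{\pa\Om}=0$, each coefficient $v_n(t)=(v(\,\cdot\,,t),\vp_n)$ formally solves the scalar fractional initial value problem $\pa_t^\al v_n+\la_nv_n=0$, $v_n(0)=(g,\vp_n)$, whose unique solution is $v_n(t)=E_{\al,1}(-\la_nt^\al)(g,\vp_n)$, giving the representation \eqref{eq-ibvp-sol}. Uniqueness follows by the same pairing, since the difference of two solutions has vanishing datum and hence every Fourier coefficient satisfies the homogeneous fractional ODE with zero initial value and vanishes. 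The crux, and the step I expect to be the main obstacle, is to verify that \eqref{eq-ibvp-sol} actually lies in $L^1(0,T;\cD(-\tri))=L^1(0,T;H^2(\Om)\cap H_0^1(\Om))$. Writing $\|v(\,\cdot\,,t)\|_{\cD(-\tri)}^2=\sum_n\la_n^2|E_{\al,1}(-\la_nt^\al)|^2|(g,\vp_n)|^2$, the naive pointwise bound from \eqref{eq-est-ML}, namely $\la_n|E_{\al,1}(-\la_nt^\al)|\le c_0\la_n/(1+\la_nt^\al)\le c_0t^{-\al}$, is integrable in $t$ only for $0<\al<1$ and fails (logarithmically) exactly at the endpoint $\al=1$. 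To handle all $0<\al\le1$ uniformly I would apply Minkowski's integral inequality to bound $\int_0^T\|v(\,\cdot\,,t)\|_{\cD(-\tri)}\,\rd t$ by $\big(\sum_n\la_n^2|(g,\vp_n)|^2\big(\int_0^T|E_{\al,1}(-\la_nt^\al)|\,\rd t\big)^2\big)^{1/2}$, and then use the positivity $E_{\al,1}(-\eta)\ge0$ together with the integration identity $\int_0^TE_{\al,1}(-\la_nt^\al)\,\rd t=T\,E_{\al,2}(-\la_nT^\al)$ and \eqref{eq-est-ML} with $\be=2$ to obtain $\int_0^T|E_{\al,1}(-\la_nt^\al)|\,\rd t\le c_0T^{1-\al}\la_n^{-1}$ uniformly in $n$. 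The summand then becomes $\la_n^2|(g,\vp_n)|^2\la_n^{-2}=|(g,\vp_n)|^2$, so the series is dominated by $\|g\|_{L^2(\Om)}$ and the desired $L^1$-in-time integrability holds for every $0<\al\le1$.

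\textbf{Part (b).} For $\al=1$ the equation is the heat equation and \eqref{eq-ibvp-fdm} is its Green representation. I would apply Green's second identity on the space-time cylinder $\Om\times(0,t)$ to $v$ and to the backward fundamental solution $(x',s)\mapsto K_1(x-x',t-s)$ from \eqref{eq-ivp-fdm}, which satisfies the adjoint heat equation for $s<t$. Since $v$ and $K_1$ are both caloric the bulk terms cancel; the face $s=0$ produces the first (volume) term carrying the initial datum $g$, while the lateral boundary $\pa\Om\times(0,t)$ yields $\int_0^t\int_{\pa\Om}[K_1\,\pa_\nu v-v\,\pa_\nu K_1]\,\rd y\rd s$, in which the second piece drops because $v|_{\pa\Om}=0$, leaving precisely the boundary convolution in \eqref{eq-ibvp-fdm}. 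The only delicate point is the singularity of $K_1(x-x',t-s)$ at $(x',s)=(x,t)$ for $x\in\Om$: I would excise a shrinking space-time ball around it, apply the identity on the punctured region, and pass to the limit, the delta-type contribution reproducing $v(x,t)$.

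\textbf{Part (c).} Here $d=1$, so $\Om$ is an interval, the $\vp_n$ are rescaled sines with $\|\vp_n\|_{L^\infty(\Om)}$ uniformly bounded and $\la_n\sim n^2$. Differentiating \eqref{eq-ibvp-sol} termwise and using $\f\rd{\rd t}E_{\al,1}(-\la_nt^\al)=-\la_nt^{\al-1}E_{\al,\al}(-\la_nt^\al)$ gives, at $x=x_0$, $\pa_tv(x_0,t)=-\sum_n\la_nt^{\al-1}E_{\al,\al}(-\la_nt^\al)(g,\vp_n)\vp_n(x_0)$. I would bound its $L^1(0,T)$ norm by $\sum_n|(g,\vp_n)|\,|\vp_n(x_0)|\int_0^T\la_nt^{\al-1}E_{\al,\al}(-\la_nt^\al)\,\rd t$, noting that the integrand equals $-\f\rd{\rd t}E_{\al,1}(-\la_nt^\al)$ and is nonnegative by complete monotonicity, so each time integral equals $1-E_{\al,1}(-\la_nT^\al)\le1$ uniformly in $n$. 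The remaining task is the convergence of $\sum_n|(g,\vp_n)|\,|\vp_n(x_0)|$, which I would settle by Cauchy--Schwarz: $\sum_n|(g,\vp_n)|\le\big(\sum_n\la_n^{-1}\big)^{1/2}\big(\sum_n\la_n|(g,\vp_n)|^2\big)^{1/2}\le C\|g\|_{H_0^1(\Om)}$, finite precisely because $\la_n\sim n^2$ makes $\sum_n\la_n^{-1}<\infty$ in one dimension while $g\in H_0^1(\Om)$ controls the second factor. The termwise differentiation is then legitimate since the differentiated series converges in $L^1(0,T)$, and this computation makes transparent why the restriction $d=1$ together with $g\in H_0^1(\Om)$ is exactly what the statement requires.
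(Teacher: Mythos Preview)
Your Minkowski step goes the wrong way. With $f_n(t)=\la_n|(g,\vp_n)|E_{\al,1}(-\la_nt^\al)$, Minkowski's integral inequality gives
\[
\Big(\sum_n\Big(\int_0^T|f_n(t)|\,\rd t\Big)^2\Big)^{1/2}\le\int_0^T\Big(\sum_n|f_n(t)|^2\Big)^{1/2}\rd t,
\]
i.e.\ $\|f\|_{\ell^2_n(L^1_t)}\le\|f\|_{L^1_t(\ell^2_n)}$, which is the reverse of what you claim. (A two-mode example with disjoint supports in $t$ already breaks your inequality.) Your naive bound $\la_n|E_{\al,1}(-\la_nt^\al)|\le c_0t^{-\al}$ is in fact all that is needed for $0<\al<1$, and this is exactly the range covered by the reference the paper cites (Sakamoto--Yamamoto); the paper gives no separate argument for the endpoint $\al=1$, and your attempted uniform fix does not survive.

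\textbf{Part (b).} Your Green's-identity argument is precisely what the paper summarizes as ``direct calculations and integration by parts''.

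\textbf{Part (c).} Your proof is correct but takes a genuinely different route from the paper. The paper works in $\cD((-\tri)^\ga)$ for a fixed $\ga\in(\tfrac14,\tfrac12)$: using \eqref{eq-est-ML} it obtains the pointwise-in-time bound $\|\pa_tv(\,\cdot\,,t)\|_{\cD((-\tri)^\ga)}\le c_0\|g\|_{H^1}t^{\al(\frac12-\ga)-1}$, and then the one-dimensional Sobolev embedding $\cD((-\tri)^\ga)\subset C(\ov\Om)$ (valid since $\ga>\tfrac14$) transfers this to $|\pa_tv(x_0,t)|$, which is integrable because the exponent exceeds $-1$. You instead swap the $t$-integral inside the sum, use complete monotonicity to evaluate $\int_0^T\la_nt^{\al-1}E_{\al,\al}(-\la_nt^\al)\,\rd t=1-E_{\al,1}(-\la_nT^\al)\le1$ exactly, and then close by Cauchy--Schwarz using the explicitly one-dimensional facts that $\|\vp_n\|_{L^\infty}$ is uniformly bounded and $\sum_n\la_n^{-1}<\infty$. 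Both arguments hinge on $d=1$ and $g\in H_0^1(\Om)$, but for different reasons: the paper needs $d=1$ for the embedding threshold $\ga>\tfrac14$ to sit below the $\tfrac12$ coming from $g\in H_0^1$, whereas you need $d=1$ for the eigenvalue growth $\la_n\sim n^2$ and the $L^\infty$-bound on eigenfunctions. The paper's approach yields extra information (an explicit blow-up rate of $\pa_tv(x_0,t)$ at $t=0$), while yours is more elementary and avoids fractional-power interpolation altogether.
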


\begin{proof}
The conclusions in (a) are summarized from Sakamoto and Yamamoto \cite{SY11a}, and the formula \eqref{eq-ibvp-fdm} in (b) follows from direct calculations and integration by parts. To show (c), we employ the formula $\f\rd{\rd t}E_{\al,1}(-\la_nt^\al)=-\la_nt^{\al-1}E_{\al,\al}(-\la_nt^\al)$ and differentiate \eqref{eq-ibvp-sol} to formally write
\[\pa_tv(\,\cdot\,,t)=-t^{\al-1}\sum_{n=1}^\infty \la_nE_{\al,\al}(-\la_nt^\al)(g,\vp_n)\vp_n,\quad t>0.\]
Since $g\in H_0^1(\Om)=\cD((-\tri)^{\f12})$, we fix any $\ga\in(\f14,\f12)$ and use \eqref{eq-est-ML} to estimate
\begin{align*}
\|\pa_tv(\,\cdot\,,t)\|_{\cD((-\tri)^\ga)}^2 & =t^{2(\al-1)}\sum_{n=1}^\infty\left|\la_n^{\ga+\f12}E_{\al,\al}(\la_nt^\al)\la_n^{\f12}(g,\vp_n)\right|^2\\
& \le t^{\al(1-2\ga)-2}\sum_{n=1}^\infty\f{c_0(\la_nt^\al)^{\ga+\f12}}{1+\la_nt^\al}\left|\la_n^{\f12}(g,\vp_n)\right|^2\le\left(c_0\|g\|_{H^1(\Om)}t^{\al(\f12-\ga)-1}\right)^2.
\end{align*}
By the one-dimensional Sobolev embedding $\cD((-\tri)^\ga)\subset C(\ov\Om)$ with $\ga>\f14$, there exists a constant $c_\ga>0$ such that
\[|\pa_tv(x_0,\,\cdot\,)|\le\|\pa_tv(\,\cdot\,,t)\|_{C(\ov\Om)}\le c_\ga c_0\|g\|_{H^1(\Om)}t^{\al(\f12-\ga)-1},\quad t>0,\]
indicating $\pa_tv(x_0,\,\cdot\,)\in L^1(0,T)$ immediately.
\end{proof}

Next we prepare for the statement of the main theorems. Let $0<\al\le1$ and $T>0$. For given constants $M>0$ and $N=0,1,2,\ldots$, define the admissible set
\begin{align}
\cU_N:=\{f\in C^1[0,T];\; & \|f\|_{C^1[0,T]}\le M,\nonumber\\
& f\mbox{ changes signs at most }N\mbox{ times on }(0,T)\}.\label{eq-def-sign}
\end{align}
The restriction on the number of sign changes follows the same line as that in \cite{STY02}, where the same problem as Problem \ref{prob-isp} was investigated for heat equations. Nevertheless, here we assume $\rho\in C^1[0,T]$ for $D_t^{1-\al}\rho$ to make sense in $L^1(0,T)$.

Now we state the first main result of this paper.

\begin{thm}\label{thm-stab}
Fix $0<\al\le1$, $T>0$ and let $\cU_N$ be defined as that in \eqref{eq-def-sign} with given constants $M>0$ and $N=0,1,2,\ldots$. Let $u_\rho$ be the solution to \eqref{eq-ivp-u} or \eqref{eq-ibvp-u}, where we assume that $g\ge0$, $g\not\equiv0$ and $g$ satisfies \eqref{eq-asp-g1} in the case of \eqref{eq-ivp-u} or \eqref{eq-asp-g2} in the case of \eqref{eq-ibvp-u}.

{\rm(a)} If $N=0$, then for any $\de\in(0,T)$, there exists a constant $B_\de>0$ depending only on $x_0,g$ such that $B_\de\uparrow\infty\ (\de\downarrow0)$ and
\begin{equation}\label{eq-stab-a}
\|\rho\|_{L^1(0,T-\de)}\le\f{T^{1-\al}B_\de}{\Ga(2-\al)}\|u_\rho(x_0,\,\cdot\,)\|_{L^1(0,T)},\quad\forall\,\rho\in\cU_0.
\end{equation}

{\rm(b)} If $N>0$, then there exist constants $C_0>0$ such that for any sufficiently small $\de>0$, there holds
\begin{equation}\label{eq-stab-b}
\|\rho\|_{L^1(0,T-\de)}\le\f{(C_0B_\de+1)^{N+1}-1}{C_0B_\de}\left(\f{T^{1-\al}B_\de}{\Ga(2-\al)}\|u_\rho(x_0,\,\cdot\,)\|_{L^1(0,T)}+M\de^2\right),\quad\forall\,\rho\in\cU_N,
\end{equation}
where $B_\de$ is the same constant as that in {\rm(a)}.
\end{thm}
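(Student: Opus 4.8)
The plan is to reduce both parts to a single reverse convolution inequality applied to a cleaned-up form of Duhamel's formula. The first thing I would record is the algebraic identity that drives everything: since $(J^\al\rho)(0)=0$, the representation \eqref{eq-Duhamel} can be rewritten as $u_\rho(x_0,\cdot)=D_t^{1-\al}\bigl(\rho*v(x_0,\cdot)\bigr)$, where $*$ is the causal convolution on $[0,T]$ and $v$ solves the homogeneous problem \eqref{eq-ivp-v} or \eqref{eq-ibvp-v}. Equivalently, setting $w:=\rho*v(x_0,\cdot)$ one has $w(0)=0$, whence by Lemma \ref{lem-Caputo-RL} $w=J^{1-\al}u_\rho(x_0,\cdot)$. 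This identity moves the awkward Riemann--Liouville derivative $D_t^{1-\al}\rho$ out of the convolution and replaces it by a plain convolution of $\rho$ against the nonnegative kernel $v(x_0,\cdot)$, at the mild cost of one fractional integral on the data side. Both settings \eqref{eq-ivp-u} and \eqref{eq-ibvp-u} are then handled uniformly, since only the positivity of $v(x_0,\cdot)$ and its local integrability near $t=0$ (guaranteed by Lemmas \ref{lem-ivp}--\ref{lem-ibvp}) will be used.

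For part (a) I would first invoke linearity to assume $\rho\ge0$ (otherwise replace $\rho$ by $-\rho$). Then both factors of $w=\rho*v(x_0,\cdot)$ are nonnegative, and I would establish the reverse convolution inequality (Lemma \ref{lem-rci}) in the form $\|\phi*\psi\|_{L^1(0,T)}\ge\|\phi\|_{L^1(0,\de)}\,\|\psi\|_{L^1(0,T-\de)}$ for $\phi,\psi\ge0$, proved by Fubini together with monotonicity of the inner integral $s\mapsto\int_0^{T-s}\psi$. Applied with $\phi=v(x_0,\cdot)$ and $\psi=\rho$ this gives $\|w\|_{L^1(0,T)}\ge V(\de)\,\|\rho\|_{L^1(0,T-\de)}$, where $V(\de):=\int_0^\de v(x_0,\tau)\,\rd\tau$. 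On the data side, Young's convolution inequality yields $\|w\|_{L^1(0,T)}=\|J^{1-\al}u_\rho(x_0,\cdot)\|_{L^1(0,T)}\le\f{T^{1-\al}}{\Ga(2-\al)}\|u_\rho(x_0,\cdot)\|_{L^1(0,T)}$, since the kernel of $J^{1-\al}$ has $L^1(0,T)$-norm $T^{1-\al}/\Ga(2-\al)$. Combining the two displays with $B_\de:=1/V(\de)$ gives exactly \eqref{eq-stab-a}; the strong maximum principle (\cite{LRY16,JPY17}) makes $v(x_0,\cdot)>0$ on $(0,\infty)$ so that $B_\de<\infty$, while the small-time lower estimates for $v(x_0,\cdot)$ from Lemmas \ref{lem-ivp}--\ref{lem-ibvp} quantify the blow-up $B_\de\uparrow\infty$ as $\de\downarrow0$.

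For part (b) I would localise this argument to the intervals of constant sign. Writing $0=\tau_0<\tau_1<\cdots<\tau_m<\tau_{m+1}=T$ (with $m\le N$) for the sign-change points, $\rho$ keeps a fixed sign on each $(\tau_{j-1},\tau_j)$, and there $w=\rho*v(x_0,\cdot)$ splits into a ``current'' part $\rho\chi_{(\tau_{j-1},\tau_j)}*v(x_0,\cdot)$, to which the part-(a) estimate applies, plus a ``memory'' part generated by $\rho|_{(0,\tau_{j-1})}$. The memory part is controlled in $L^1(\tau_{j-1},\tau_j)$ by $C_0\,\|\rho\|_{L^1(0,\tau_{j-1})}$ with $C_0:=\|v(x_0,\cdot)\|_{L^1(0,T)}$ (Fubini once more), so that the reverse convolution inequality on each interval produces the recursion $\|\rho\|_{L^1(\tau_{j-1},\tau_j-\de)}\le B_\de\|w\|_{L^1(\tau_{j-1},\tau_j)}+C_0B_\de\|\rho\|_{L^1(0,\tau_{j-1})}$. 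The short collars $(\tau_j-\de,\tau_j)$ that the inequality cannot reach are handled directly: since $\rho\in C^1[0,T]$ with $\|\rho\|_{C^1}\le M$ and $\rho(\tau_j)=0$ at each sign change, $\int_{\tau_j-\de}^{\tau_j}|\rho|\le M\de^2/2$. Feeding the collar bounds into the recursion, summing over the at most $N+1$ intervals, and bounding $\sum_j\|w\|_{L^1(\tau_{j-1},\tau_j)}=\|w\|_{L^1(0,T)}$ as in part (a), the geometric amplification with ratio $1+C_0B_\de$ collapses into the prefactor $\f{(C_0B_\de+1)^{N+1}-1}{C_0B_\de}$, which is exactly \eqref{eq-stab-b}.

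The routine ingredients are the convolution algebra and Young's inequality; the genuine content is twofold. First, the reverse convolution inequality is the crux of part (a) and must be sharp enough that the resulting $B_\de=1/V(\de)$ is governed solely by $v(x_0,\cdot)$ near $t=0$; its proof, though short, is precisely where the flaws of \cite{STY02} have to be repaired. Second, and harder, is the bookkeeping of part (b): one must track the convolution memory across sign changes without destroying the constant-sign structure on each piece, verify that the collar terms accumulate only to an additive $M\de^2$ instead of corrupting the geometric factor, and confirm that the recursion closes to the stated prefactor. I expect this propagation-of-error estimate, rather than any single inequality, to be the main obstacle.
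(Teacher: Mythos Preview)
Your proposal is correct and follows essentially the same architecture as the paper: pass from \eqref{eq-Duhamel} to the clean convolution identity $J^{1-\al}u_\rho(x_0,\cdot)=\rho*v(x_0,\cdot)$, apply the reverse convolution inequality (Lemma~\ref{lem-rci}) together with Young's inequality for part~(a), and for part~(b) run an induction over the sign-change intervals with a memory term controlled by a constant $C_0$ and collar terms controlled by $\|\rho\|_{C^1}\le M$ and $\rho(\tau_j)=0$.

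Two minor differences are worth flagging. First, the paper applies Lemma~\ref{lem-rci}(b) on the overshooting interval $(t_k,t_{k+1}+\de)$, so the collar contribution $2\|\rho\|_{L^1(t_{k+1},t_{k+1}+\de)}$ appears inside the lemma, whereas you stay on $(\tau_{j-1},\tau_j)$, use only the sign-preserving version~(a), and handle the collar $(\tau_j-\de,\tau_j)$ separately; both bookkeepings close to the same prefactor. Second, and more interestingly, your choice $C_0=\|v(x_0,\cdot)\|_{L^1(0,T)}$ is a genuine simplification: the paper instead derives $C_0$ from pointwise bounds on $v(x_0,t)$ via the Fox $H$-function estimates \eqref{eq-Fox-est0}--\eqref{eq-Fox-est1} in the whole-space case and Mittag-Leffler estimates in the bounded-domain case (a page of work culminating in \eqref{eq-est-2a}--\eqref{eq-est-2b}), but for Theorem~\ref{thm-stab} itself the Fubini bound $\|I_j\|_{L^1}\le\|v(x_0,\cdot)\|_{L^1(0,T)}\|\rho\|_{L^1(t_{j-1},t_j)}$ suffices, since $v(x_0,\cdot)\in L^1(0,T)$ is already established at the outset.
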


By the assumptions in Theorem \ref{thm-stab} and Lemmas \ref{lem-ivp}--\ref{lem-ibvp}, we see that the observation data $u_\rho(x_0,\,\cdot\,)$ makes sense in $L^1(0,T)$. Parallel to that in \cite{STY02}, both estimates in Theorem \ref{thm-stab} are obtained at the cost of a small extra interval $(T-\de,T)$ of measurements. Such an interval cannot be arbitrarily small because the constant $B_\de\uparrow\infty$ as $\de\downarrow0$. Actually, the choice of $B_\de$ will be
\begin{equation}\label{eq-def-Bd}
B_\de:=\f1{\|v(x_0,\,\cdot\,)\|_{L^1(0,\de)}},\quad\de>0
\end{equation}
in the proof, where $v$ stands for the solution to the homogeneous problem \eqref{eq-ivp-v} or \eqref{eq-ibvp-v}.

Let us explain the results in Theorem \ref{thm-stab} in detail. First, in comparison with \cite{SY11a,FK16} where the same problem as Problem \ref{prob-isp} were treated, the restriction $x_0\in\supp\,g$ is removed, indicating that the monitoring point is allowed to be far away from the support of $g$. As was mentioned in Section \ref{sec-intro}, such a relaxation is significant in some practical applications. Moreover, estimates \eqref{eq-stab-a} and \eqref{eq-stab-b} only require the $L^1$-norm of the observation data, while that in \cite{SY11a,FK16} involves the Caputo derivative. Obviously, the former is more desirable in treating the noisy data.

Second, Theorem \ref{thm-stab}(a) gives a Lipschitz stability estimate \eqref{eq-stab-a} for the inverse source problem under the assumption that $\rho$ does not change its sign on $(0,T)$. In this case, the maximum principle (see Luchko \cite{L09}) indicates that $u_\rho(x_0,\,\cdot\,)$ also keeps the sign. This is unrealistic because $u_\rho(x_0,\,\cdot\,)$ stands for the measurement error in the context of stability analysis. Nevertheless, in Section \ref{sec-recon} we will propose an iterative method producing a monotone sequence converging to the true solution, where the stability of Lipschitz type becomes remarkable for the fast convergence.

Third, in the general case of $N=1,2,\ldots$, i.e., $\rho$ changes signs finite times on $(0,T)$, the estimate \eqref{eq-stab-b} in Theorem \ref{thm-stab}(b) looks complicated with some extra terms. Especially, the constant multiplier of $\|u_\rho(x_0,\,\cdot\,)\|_{L^1(0,T)}$ increases with larger $N$. In order to further treat the estimate \eqref{eq-stab-b}, according to \eqref{eq-def-Bd} we have to give a lower estimate of $\|v(x_0,\,\cdot\,)\|_{L^1(0,\de)}$ with respect to $\de$. This is possible in some cases and we collect the result as follows.

\begin{thm}\label{thm-stab-log}
Under the same conditions as that in Theorem $\ref{thm-stab}$, further assume that $N>0$ and $\|u_\rho(x_0,\,\cdot\,)\|_{L^1(0,T)}$ is sufficiently small.

{\rm(a)} If $x_0\in\supp\,g$, further assume $g\in C(\ov\Om)$ in the case of \eqref{eq-ibvp-u}. Then there exist a sufficiently small constant $\de_*>0$ and a constant $C_1>0$ such that
\begin{equation}\label{eq-stab-c}
\|\rho\|_{L^1(0,T-\de_*)}\le C_1\|u_\rho(x_0,\,\cdot\,)\|_{L^1(0,T)}^{(\f23)^{N+1}},\quad\forall\,\rho\in\cU_N.
\end{equation}

{\rm(b)} In the case of \eqref{eq-ivp-u}, additionally assume that $g$ is compactly supported and $x_0\not\in\ov{\supp\,g}$. In the case of \eqref{eq-ibvp-u}, additionally require $\al=1$ and
\begin{equation}\label{eq-asp-x0}
\ka_0:=\sup_{x\in\supp\,g}|x-x_0|\le\inf_{y\in\pa\Om}|y-x_0|=:\ka_1.
\end{equation}
Then there exist a sufficiently small $\de_*>0$ and a constant $C_2>0$ such that
\begin{equation}\label{eq-stab-d}
\|\rho\|_{L^1(0,T-\de_*)}\le C_2\,(\underbrace{\log(\cdots(\log|\log}_{N+1}\|u_\rho(x_0,\,\cdot\,)\|_{L^1(0,T)}|)\cdots))^{-2(\f2\al-1)},\quad\forall\,\rho\in\cU_N.
\end{equation}
\end{thm}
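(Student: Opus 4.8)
My plan is to reduce Theorem \ref{thm-stab-log} to two ingredients: a quantitative lower bound for $\|v(x_0,\cdot)\|_{L^1(0,\de)}$ as $\de\downarrow0$ (equivalently an upper bound for the constant $B_\de$ of \eqref{eq-def-Bd}), and an optimization of the cutoff $\de$ against the data norm $\eta:=\|u_\rho(x_0,\cdot)\|_{L^1(0,T)}$. The basic building block is the one-sign-change estimate underlying Theorem \ref{thm-stab}: on an interval where $\rho$ keeps its sign, the reverse convolution inequality (Lemma \ref{lem-rci}) applied to the Duhamel representation \eqref{eq-Duhamel} gives a bound of the form $\|\rho\|_{L^1}\lesssim B_\de\,\eta+M\de^2$, where the $O(\de^2)$ truncation error appears because $\rho\in C^1$ vanishes at each sign-change point, so its $L^1$-mass on a sliver of width $\de$ about such a point is $O(\de^2)$. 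Balancing $B_\de\eta$ against $\de^2$ yields a first-level rate $\om_1(\eta)$; to reach the sharp exponents I would then iterate this estimate over the $N+1$ intervals of constant sign, re-optimizing $\de$ at each level and feeding the reconstruction error of one interval as the effective data of the next, so that each sign change contributes exactly one composition $\eta\mapsto\om_1(\eta)$. This iteration, rather than a single global choice of $\de$ in \eqref{eq-stab-b}, is what produces the factor $N+1$ in \eqref{eq-stab-c} and \eqref{eq-stab-d}.

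For part (a), $x_0\in\supp g$ with $g\in C(\ov\Om)$, $g\ge0$, $g\not\equiv0$. Then $v(x_0,t)$ is continuous down to $t=0$ with $v(x_0,0)=g(x_0)$, and the strong positivity of $v$ recorded in the introduction keeps it from degenerating near $t=0$; thus $\|v(x_0,\cdot)\|_{L^1(0,\de)}\ge c\,\de$ (immediately so when $g(x_0)>0$), i.e. $B_\de\lesssim\de^{-1}$. The single-level balance $\de^{-1}\eta\sim\de^2$ gives $\de\sim\eta^{1/3}$ and $\om_1(\eta)\sim\eta^{2/3}$, and composing $\eta\mapsto\eta^{2/3}$ over the $N+1$ constant-sign intervals produces the exponent $(2/3)^{N+1}$ of \eqref{eq-stab-c}.

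For part (b), $x_0$ lies at a positive distance from $\supp g$, so $v(x_0,\cdot)$ is exponentially small near $t=0$ and the estimate of $B_\de$ must come from the fundamental solution. In the whole-space case I would insert the lower asymptotics \eqref{eq-Fox-asp} into \eqref{eq-ivp-fdm}: for $|x_0-x'|\ge\mathrm{dist}(x_0,\supp g)>0$ this gives $v(x_0,t)\gtrsim\exp(-c\,t^{-\al/(2-\al)})$ for small $t$, whence $\|v(x_0,\cdot)\|_{L^1(0,\de)}\gtrsim\exp(-c'\de^{-\al/(2-\al)})$ and $B_\de\lesssim\exp(c'\de^{-\al/(2-\al)})$. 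The balance $\exp(c'\de^{-\al/(2-\al)})\eta\sim\de^2$ then forces $\de\sim(\log(1/\eta))^{-(2-\al)/\al}$ and $\om_1(\eta)\sim(\log(1/\eta))^{-2(2/\al-1)}$; composing over the $N+1$ intervals turns $|\log\eta|$ into $N+1$ nested logarithms while preserving the exponent $-2(2/\al-1)$, which is exactly \eqref{eq-stab-d}. In the bounded-domain case I would take $\al=1$ and use the heat-kernel representation \eqref{eq-ibvp-fdm}: the volume term is bounded below by $\exp(-\ka_0^2/(4t))$ and the boundary term above by $\exp(-\ka_1^2/(4t))$, so the hypothesis $\ka_0\le\ka_1$ in \eqref{eq-asp-x0} guarantees that the direct signal is not cancelled by the boundary reflection, leaving the same type of exponential lower bound (with $\al/(2-\al)=1$).

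I expect the main obstacle to be the lower bound in part (b). Turning the pointwise asymptotics \eqref{eq-Fox-asp}, valid as $t\downarrow0$ for fixed $|x|>r$, into a lower bound for the positive quantity $v(x_0,t)=\int_{\BR^d}K_\al(x_0-x',t)\,g(x')\,\rd x'$ that is uniform for small $t$ requires care, since one must retain the leading exponential after integrating against $g$ and discard only harmless polynomial factors; the bounded-domain case is harder still because the sign of $\pa_\nu v$ on $\pa\Om$ must be controlled so that the boundary integral in \eqref{eq-ibvp-fdm} does not destroy the lower bound, which is the sole purpose of \eqref{eq-asp-x0}. A secondary difficulty is the bookkeeping of the iteration when $0<\al<1$: the Riemann-Liouville derivative $D_t^{1-\al}\rho$ in \eqref{eq-Duhamel} is nonlocal in time, so peeling off one constant-sign interval and propagating the reconstruction error to the next stage is less transparent than in the classical case $\al=1$ and must be organized through the reverse convolution inequality rather than a naive semigroup splitting.
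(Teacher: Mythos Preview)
Your proposal is correct and matches the paper's approach: bound $B_\de$ via the short-time behavior of $v(x_0,\cdot)$ (continuity at $t=0$ giving $B_\de\lesssim\de^{-1}$ in (a); the asymptotics \eqref{eq-Fox-asp} or the heat-kernel representation \eqref{eq-ibvp-fdm} with the Hopf lemma giving $B_\de\lesssim\exp(\ka\,\de^{-\al/(2-\al)})$ in (b)), then iterate the balanced one-interval estimate over the constant-sign subintervals with a fresh $\de_{k+1}$ at each step, exactly as you describe. Your anticipated ``secondary difficulty'' with the nonlocality of $D_t^{1-\al}\rho$ dissolves because the proof of Theorem \ref{thm-stab} already applies $J^{1-\al}$ to obtain \eqref{eq-Duhamel-J}, so the reverse convolution inequality and the interval-by-interval splitting are carried out for $\rho$ itself against the data $J^{1-\al}u_\rho(x_0,\cdot)$, and the key recursive estimate \eqref{eq-est-3} used in the iteration is stated directly in terms of $\|\rho\|_{L^1(0,t_k)}$.
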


Obviously, estimates \eqref{eq-stab-c} and \eqref{eq-stab-d} indicate a weak H\"older stability and an extremely weak stability of the multiple logarithmic type, respectively. In this sense, Theorem \ref{thm-stab-log} reflects the severe ill-posedness of Problem \ref{prob-isp} as the observation data $u_\rho(x_0,\,\cdot\,)\to0$ in $L^1(0,T)$. Especially, the stability becomes worse with a larger number $N$ of the sign changes of $\rho$. Roughly speaking, both \eqref{eq-stab-c} and \eqref{eq-stab-d} result from a minimization process based on the estimate \eqref{eq-stab-b}, and the minimizer $\de_*>0$ more or less takes the place of an optimal choice of the regularization parameter.

In Theorem \ref{thm-stab-log}(a), we reconsider the situation of $x_0\in\supp\,g$ as the majority of related literature restricted. In this case, \cite[Theorem 4.4]{SY11a} and \cite[Theorem 1.1]{FK16} achieve stronger stabilities than our H\"older one \eqref{eq-stab-c} at the cost of higher regularity assumptions on the observation data.

Remarkably, Theorem \ref{thm-stab-log}(b) allows us to choose $x_0\not\in\supp\,g$ under certain technical conditions. Owing to the power $-2(\f2\al-1)$ in \eqref{eq-stab-d}, we observe that the stability is slightly improved with smaller $\al$, though such an improvement does not change the essential weakness of the stability. This indicates the similarity of the cases of $\al=1$ and $0<\al<1$ in our problem, unlike the significant difference occurring in many other problems. In fact, such a similarity originates from the exponential decay of $\|v(x_0,\,\cdot\,)\|_{L^1(0,\de)}$ as $\de\downarrow0$ for all $0<\al\le1$ under the conditions in Theorem \ref{thm-stab-log}. For this part, we have to take advantage of the fundamental solutions introduced before. Especially, in the bounded domain case, we require $\al=1$ because the fundamental solution for \eqref{eq-ibvp-v} with $0<\al<1$ is not yet available.

\Section{Proofs of Theorems \ref{thm-stab} and \ref{thm-stab-log}}\label{sec-proof}

In this section, we prove the above theorems based on the representation \eqref{eq-Duhamel} in Lemma \ref{lem-Duhamel}. Following the argument in the proof of \cite[Theorem 2.6]{JLLY17}, we take the Riemann-Liouville integral operator $J^{1-\al}$ in \eqref{eq-Duhamel} to deduce
\begin{equation}\label{eq-Duhamel-J}
J^{1-\al}u_\rho(\,\cdot\,,t)=\int_0^t\rho(s)\,v(\,\cdot\,,t-s)\,\rd s.
\end{equation}
Moreover, as long as $u_\rho(x_0,\,\cdot\,)\in L^1(0,T)$, Young's inequality immediately gives
\begin{align}
\|J^{1-\al}u_\rho(x_0,\,\cdot\,)\|_{L^1(0,T)} & =\f1{\Ga(1-\al)}\left\|\int_0^t\f{u_\rho(x_0,s)}{(t-s)^\al}\,\rd s\right\|_{L^1(0,T)}\nonumber\\
& \le\f{T^{1-\al}}{\Ga(2-\al)}\|u_\rho(x_0,\,\cdot\,)\|_{L^1(0,T)}.\label{eq-est-RL}
\end{align}
Therefore, the problem is now reduced to estimating $\rho$ in the convolution \eqref{eq-Duhamel-J} with $x=x_0$. To this end, the following lemma plays a fundamental role.

\begin{lem}[Reverse convolution inequality]\label{lem-rci}
Let $0\le\eta<T_0<\infty$ and $\de>0$ be arbitrarily given. Suppose that $f_1\in L^1(\eta,T_0+\de)$, $f_2\in L^1(0,T_0+\de-\eta)$ and $f_2\ge0$ on $(0,T_0+\de-\eta)$.

{\rm(a)} If $f_1$ keeps its sign on $(\eta,T_0+\de)$, then
\begin{equation}\label{eq-RCI-a}
\|f_1\|_{L^1(\eta,T_0)}\|f_2\|_{L^1(0,\de)}\le\left\|\int_\eta^tf_1(s)f_2(t-s)\,\rd s\right\|_{L^1(\eta,T_0+\de)}.
\end{equation}

{\rm(b)} If $f_1$ only keeps its sign on $(\eta,T_0)$, then
\begin{equation}\label{eq-RCI-b}
\|f_1\|_{L^1(\eta,T_0)}\|f_2\|_{L^1(0,\de)}\le\left\|\int_\eta^tf_1(s)f_2(t-s)\,\rd s\right\|_{L^1(\eta,T_0+\de)}+2\|f_1\|_{L^1(T_0,T_0+\de)}\|f_2\|_{L^1(0,\de)}.
\end{equation}
\end{lem}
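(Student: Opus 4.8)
The plan is to turn both inequalities into applications of Tonelli's theorem, exploiting the nonnegativity of $f_2$ through the monotonicity $\de\mapsto\|f_2\|_{L^1(0,\de)}$, and to reduce part (b) to part (a) by isolating the portion of $f_1$ on which the sign is controlled. Since replacing $f_1$ by $-f_1$ leaves every term in \eqref{eq-RCI-a} and \eqref{eq-RCI-b} unchanged, I may assume throughout that $f_1\ge0$ on the subinterval where it keeps its sign. Write $F(t):=\int_\eta^t f_1(s)\,f_2(t-s)\,\rd s$ for the convolution appearing on the right-hand sides.

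For part (a), $f_1\ge0$ and $f_2\ge0$ force $F\ge0$ on $(\eta,T_0+\de)$, so $\|F\|_{L^1(\eta,T_0+\de)}=\int_\eta^{T_0+\de}F(t)\,\rd t$. Applying Tonelli over the triangle $\{\eta<s<t<T_0+\de\}$ and substituting $r=t-s$ recasts this as
\[\int_\eta^{T_0+\de}f_1(s)\left(\int_0^{T_0+\de-s}f_2(r)\,\rd r\right)\rd s.\]
I would then drop the nonnegative contribution of $s\in(T_0,T_0+\de)$ and, for $s\in(\eta,T_0)$, use $T_0+\de-s>\de$ together with $f_2\ge0$ to bound the inner integral below by $\|f_2\|_{L^1(0,\de)}$. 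Since $f_1\ge0$, the remaining factor is exactly $\int_\eta^{T_0}f_1(s)\,\rd s=\|f_1\|_{L^1(\eta,T_0)}$, which yields \eqref{eq-RCI-a}.

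For part (b) the difficulty is that $f_1$ may change sign on $(T_0,T_0+\de)$, so $F$ is no longer sign-definite there and the identity $\|F\|_{L^1}=\int F$ breaks down. To circumvent this I would split $f_1=f_1\chi_{(\eta,T_0)}+f_1\chi_{(T_0,T_0+\de)}$, inducing $F=F_{\mathrm g}+F_{\mathrm b}$ accordingly. The truncation $f_1\chi_{(\eta,T_0)}$ is nonnegative on the whole of $(\eta,T_0+\de)$, so part (a), applied verbatim to it, gives $\|f_1\|_{L^1(\eta,T_0)}\|f_2\|_{L^1(0,\de)}\le\|F_{\mathrm g}\|_{L^1(\eta,T_0+\de)}$. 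The triangle inequality then yields $\|F_{\mathrm g}\|_{L^1}\le\|F\|_{L^1}+\|F_{\mathrm b}\|_{L^1}$, while the ordinary (forward) Young inequality --- equivalently, the same Tonelli computation restricted to $s\in(T_0,T_0+\de)$, where now $T_0+\de-s<\de$ --- controls $\|F_{\mathrm b}\|_{L^1(\eta,T_0+\de)}\le\|f_1\|_{L^1(T_0,T_0+\de)}\|f_2\|_{L^1(0,\de)}$. Combining the three bounds gives \eqref{eq-RCI-b}.

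The crux is precisely the loss of sign-definiteness of $F$ on the extra window $(T_0,T_0+\de)$; once this part is isolated into $F_{\mathrm b}$ and absorbed by the forward Young estimate, everything reduces to Tonelli plus the monotonicity of $\|f_2\|_{L^1(0,\,\cdot\,)}$. I note that this argument in fact produces the constant $1$ in front of $\|f_1\|_{L^1(T_0,T_0+\de)}\|f_2\|_{L^1(0,\de)}$, so the factor $2$ stated in \eqref{eq-RCI-b} follows a fortiori and leaves room for a coarser bookkeeping of the boundary terms.
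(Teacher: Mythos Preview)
Your argument is correct. Part (a) is essentially identical to the paper's: both compute $\int_\eta^{T_0+\de}F(t)\,\rd t$ by Tonelli, drop the slice $s\in(T_0,T_0+\de)$, and invoke the monotonicity of $\de\mapsto\|f_2\|_{L^1(0,\de)}$.

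For part (b) your route differs from the paper's. The paper keeps the single quantity $I=\int_\eta^{T_0+\de}\int_\eta^t|f_1(s)|\,f_2(t-s)\,\rd s\,\rd t$, bounds it below by $\|f_1\|_{L^1(\eta,T_0)}\|f_2\|_{L^1(0,\de)}$ exactly as in (a), and then bounds it above by decomposing the triangle $\{\eta<s<t<T_0+\de\}$ into three pieces and using the crude pointwise inequality $|f_1(s)|\le f_1(s)+2|f_1(s)|$ on the corner $\{T_0<s<t<T_0+\de\}$; this is what produces the factor $2$. You instead split $f_1$ itself as $f_1\chi_{(\eta,T_0)}+f_1\chi_{(T_0,T_0+\de)}$, apply part (a) to the first piece (which is globally nonnegative), and absorb the second via the forward Young bound and the triangle inequality for $\|F\|_{L^1}$. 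Your decomposition is slightly cleaner and, as you observe, yields the sharper constant $1$ in place of $2$; the paper's version trades that sharpness for a more self-contained upper--lower sandwich on a single integral.
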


\begin{proof}
We consider
\[I:=\int_\eta^{T_0+\de}\!\!\!\!\int_\eta^t|f_1(s)|\,f_2(t-s)\,\rd s\rd t.\]
Without loss of generality, we can assume $f_1\ge0$ on $(\eta,T_0)$, or otherwise we may consider $-f_1$ instead. Similarly to the proof of \cite[Theorem 3.1]{STY02}, we estimate $I$ from below as
\begin{align}
I & =\int_\eta^{T_0+\de}|f_1(s)|\int_s^{T_0+\de}f_2(t-s)\,\rd t\rd s=\int_\eta^{T_0+\de}|f_1(s)|\int_0^{T_0+\de-s}f_2(t)\,\rd t\rd s\nonumber\\
& \ge\int_\eta^{T_0}|f_1(s)|\int_0^{T_0+\de-s}f_2(t)\,\rd t\rd s\ge\int_\eta^{T_0}|f_1(s)|\int_0^\de f_2(t)\,\rd t\rd s\nonumber\\
& =\|f_1\|_{L^1(\eta,T_0)}\|f_2\|_{L^1(0,\de)}.\label{eq-RCI-1}
\end{align}

(a) If $f_1\ge0$ throughout $(\eta,T_0+\de)$, then obviously
\[I=\left\|\int_\eta^tf_1(s)f_2(t-s)\,\rd s\right\|_{L^1(\eta,T_0+\de)}\]
and thus \eqref{eq-RCI-1} is exactly \eqref{eq-RCI-a}.\medskip

(b) Considering the possibility of $f_1\le0$ on $(T_0,T_0+\de)$, we split $I$ into three parts and continue to estimate
\begin{align*}
I & =\int_\eta^{T_0}\!\!\!\!\int_\eta^tf_1(s)f_2(t-s)\,\rd s\rd t+\int_{T_0}^{T_0+\de}\!\!\!\!\int_\eta^{T_0}f_1(s)f_2(t-s)\,\rd s\rd t+\int_{T_0}^{T_0+\de}\!\!\!\!\int_{T_0}^t|f_1(s)|\,f_2(t-s)\,\rd s\rd t\\
& \le\int_\eta^{T_0}\!\!\!\!\int_\eta^tf_1(s)f_2(t-s)\,\rd s\rd t+\int_{T_0}^{T_0+\de}\!\!\!\!\int_\eta^{T_0}f_1(s)f_2(t-s)\,\rd s\rd t+\int_{T_0}^{T_0+\de}\!\!\!\!\int_{T_0}^tf_1(s)f_2(t-s)\,\rd s\rd t\\
& \quad\,+2\int_{T_0}^{T_0+\de}\!\!\!\!\int_{T_0}^t|f_1(s)|\,f_2(t-s)\,\rd s\rd t\\
& =\int_\eta^{T_0+\de}\!\!\!\!\int_\eta^tf_1(s)f_2(t-s)\,\rd s\rd t+2\int_{T_0}^{T_0+\de}\!\!\!\!\int_{T_0}^t|f_1(s)|\,f_2(t-s)\,\rd s\rd t\\
& \le\left\|\int_\eta^tf_1(s)f_2(t-s)\,\rd s\right\|_{L^1(\eta,T_0+\de)}+2\int_{T_0}^{T_0+\de}\!\!\!\!\int_{T_0}^t|f_1(s)|\,f_2(t-s)\,\rd s\rd t.
\end{align*}
Since the term $\int_{T_0}^t|f_1(s)|\,f_2(t-s)\,\rd s$ is the convolution of $|f_1|$ and $f_2$ starting from $t=T_0$, it follows immediately from Young's inequality that
\[\int_{T_0}^{T_0+\de}\!\!\!\!\int_{T_0}^t|f_1(s)|\,f_2(t-s)\,\rd s\rd t\le\|f_1\|_{L^1(T_0,T_0+\de)}\|f_2\|_{L^1(0,\de)},\]
and consequently
\[I\le\left\|\int_\eta^tf_1(s)f_2(t-s)\,\rd s\right\|_{L^1(\eta,T_0+\de)}+2\|f_1\|_{L^1(T_0,T_0+\de)}\|f_2\|_{L^1(0,\de)}.\]
The combination of the above inequality with \eqref{eq-RCI-1} finishes the proof of \eqref{eq-RCI-b}.
\end{proof}

\begin{proof}[Proof of Theorem $\ref{thm-stab}$]
Fix $\rho\in\cU_N$ arbitrarily. As was explained before, it suffices to give an estimate for $\rho$ in the convolution \eqref{eq-Duhamel-J} with $x=x_0$.

First we mention that both $v(x_0,\,\cdot\,)$ and $u_\rho(x_0,\,\cdot\,)$ make sense in $L^1(0,T)$. Actually, for the whole space case \eqref{eq-ivp-v}, $v$ is pointwisely defined according to Lemma \ref{lem-ivp}. For the bounded domain case \eqref{eq-ibvp-v}, it follows from \cite[Theorem 2.1]{LLY15} that $v\in L^1(0,T;H^2(\Om)\cap H_0^1(\Om))$. Since we restrict $d\le3$, the Sobolev embedding theorem implies $v(x_0,\,\cdot\,)\in L^1(0,T)$. On the other hand, we incorporate Lemma \ref{lem-Caputo-RL} with $\|\rho\|_{C^1[0,T]}\le M$ to estimate for $0<t\le T$ that
\begin{align*}
|D_t^{1-\al}\rho(t)| & =\left|\f{\rho(0)}{\Ga(\al)}t^{\al-1}+\pa_t^{1-\al}\rho(t)\right|\le\f{|\rho(0)|}{\Ga(\al)}t^{1-\al}+\f1{\Ga(\al)}\int_0^t\f{|\rho'(s)|}{(t-s)^{1-\al}}\,\rd s\\
& \le\f M{\Ga(\al)}\left(t^{\al-1}+\f{t^\al}{\al}\right),
\end{align*}
implying $D_t^{1-\al}\rho\in L^1(0,T)$. Hence, Young's inequality immediately gives $u_\rho(x_0,\,\cdot\,)\in L^1(0,T)$.

Next, since the initial value $g$ of $v$ is assumed to be non-negative and non-vanishing, we know $v>0$ in $\BR^d\times(0,\infty)$ in the case of \eqref{eq-ivp-v} by Lemma \ref{lem-ivp}, and $v>0$ a.e.\! in $\Om\times(0,\infty)$ in the case of \eqref{eq-ibvp-v} by the strong maximum principle established in \cite[Theorem 1.2]{LRY16}. Now that $v(x_0,\,\cdot\,)>0$ a.e.\! in $(0,T)$, it is readily seen that the constant $B_\de$ defined in \eqref{eq-def-Bd} is strictly positive and well-defined for any $\de>0$.

Now we are in a position to substitute $f_1:=\rho$ and $f_2:=v(x_0,\,\cdot\,)$ into Lemma \ref{lem-rci} and discuss the case of $N=0$ and $N>0$ respectively.\medskip

(a) As $\rho\in\cU_0$ means that $\rho$ keeps the sign on $(0,T)$, we simply take $\eta=0$ and $T_0=T-\de$ ($0<\de<T$) in \eqref{eq-RCI-a} to obtain
\begin{align*}
\|\rho\|_{L^1(0,T-\de)} & \le\f1{\|v(x_0,\,\cdot\,)\|_{L^1(0,\de)}}\left\|\int_0^t\rho(s)\,v(x_0,t-s)\,\rd s\right\|_{L^1(0,T)}\\
& =B_\de\|J^{1-\al}u_\rho(x_0,\,\cdot\,)\|_{L^1(0,T)}\le\f{T^{1-\al}B_\de}{\Ga(2-\al)}\|u_\rho(x_0,\,\cdot\,)\|_{L^1(0,T)},
\end{align*}
where we applied \eqref{eq-est-RL} in the last inequality.\medskip

(b) Now let $N=1,2,\ldots$ and $\de\in(0,1)$ be sufficiently small. Since $\rho\in\cU_N$, by definition we can assume that $\rho$ changes signs at $0<t_1<t_2<\cdots<t_{N'}<T-\de$ ($N'\le N$) on $(0,T-\de)$. For later convenience, additionally set $t_0:=0$ and $t_{N'+1}:=T-\de$. Taking $x=x_0$ in \eqref{eq-Duhamel-J} and rearranging according to the sign of $\rho$, we can write
\[\int_{t_k}^t\rho(s)\,v(x_0,t-s)\,\rd s=J^{1-\al}u_\rho(x_0,t)-\sum_{j=1}^kI_j,\quad k=0,1,\ldots,N',\]
where
\[I_j:=\int_{t_{j-1}}^{t_j}\rho(s)\,v(x_0,t-s)\,\rd s,\quad j=1,\ldots,k.\]
Applying \eqref{eq-RCI-b} in Lemma \ref{lem-rci} to the above equality with $\eta=t_k$ and $T_0=t_{k+1}$, we obtain
\begin{align}
\|\rho\|_{L^1(t_k,t_{k+1})} & \le B_\de\left\|\int_{t_k}^t\rho(s)\,v(x_0,t-s)\,\rd s\right\|_{L^1(t_k,t_{k+1}+\de)}+2\|\rho\|_{L^1(t_{k+1},t_{k+1}+\de)}\nonumber\\
& \le B_\de\left(\|J^{1-\al}u_\rho(x_0,\,\cdot\,)\|_{L^1(t_k,t_{k+1}+\de)}+\sum_{j=1}^k\|I_j\|_{L^1(t_k,t_{k+1}+\de)}\right)\nonumber\\
& \quad\,+2\|\rho\|_{L^1(t_{k+1},t_{k+1}+\de)},\quad k=0,1,\ldots,N'.\label{eq-est-0}
\end{align}
For $\|\rho\|_{L^1(t_{k+1},t_{k+1}+\de)}$, it follows from $\|\rho\|_{C^1[0,T]}\le M$ and the fact that $\rho(t_{k+1})=0$ that
\begin{equation}\label{eq-est-1}
\|\rho\|_{L^1(t_{k+1},t_{k+1}+\de)}\le\int_{t_{k+1}}^{t_{k+1}+\de}\!\!\!\int_{t_{k+1}}^{t_{k+1}+t}|\rho'(s)|\,\rd s\rd t\le\f M2\de^2.
\end{equation}
For $\|I_j\|_{L^1(t_k,t_{k+1}+\de)}$, we change the order of integrals to deduce
\begin{align}
\|I_j\|_{L^1(t_k,t_{k+1}+\de)} & =\int_{t_k}^{t_{k+1}+\de}\!\!\!\int_{t_{j-1}}^{t_j}|\rho(s)|\,v(x_0,t-s)\,\rd s\rd t=\int_{t_{j-1}}^{t_j}|\rho(s)|\int_{t_k}^{t_{k+1}+\de}v(x_0,t-s)\,\rd t\rd s\nonumber\\
& =\int_{t_{j-1}}^{t_j}|\rho(s)|\|v(x_0,\,\cdot\,)\|_{L^1(t_k-s,t_{k+1}+\de-s)}\,\rd s.\label{eq-est-2}
\end{align}
To deal with $\|v(x_0,\,\cdot\,)\|_{L^1(t_k-s,t_{k+1}+\de-s)}$, we shall turn to the explicit solution to give a pointwise estimate of $|v(x_0,t)|$ for $0<t<T$. To this end, we discuss the whole space case \eqref{eq-ivp-v} and the bounded domain case \eqref{eq-ibvp-v} separately.

For the Cauchy problem \eqref{eq-ivp-v}, we recall the formula \eqref{eq-ivp-fdm} in Lemma \ref{lem-ivp} and the assumption \eqref{eq-asp-g1} to bound
\begin{align}
|v(x_0,t)| & \le\int_{\BR^d}|K_\al(x-x_0,t)||g(x)|\,\rd x\nonumber\\
& \le\|g\|_{C(\BR^d)}\left(\int_{|x-x_0|\le t^{\al/2}}|K_\al(x-x_0,t)|\,\rd x+\int_{|x-x_0|>t^{\al/2}}|K_\al(x-x_0,t)|\,\rd x\right)\nonumber\\
& \le\|g\|_{C(\BR^d)}\left(J_1+c_1\,t^{-\f{\al d}2}J_2\right),\label{eq-est-2'}
\end{align}
where we apply \eqref{eq-Fox-est1} to treat the case of $|x-x_0|>t^{\f\al2}$ and define
\[J_1:=\int_{|x-x_0|\le t^{\al/2}}|K_\al(x-x_0,t)|\,\rd x,\quad J_2:=\int_{|x-x_0|>t^{\al/2}}\exp\left(-c_2\,t^{-\f\al{2-\al}}|x-x_0|^{\f2{2-\al}}\right)\rd x.\]
We first use \eqref{eq-Fox-est0} to treat $J_1$. For $d=1$, we immediately get $J_1\le2c_1$. For $d=2$, we change to the polar coordinate to estimate
\[J_1\le c_1\,t^{-\al}\int_{|x-x_0|\le t^{\al/2}}\left(\left|\log\left(\f{|x-x_0|^2}{t^\al}\right)\right|+1\right)\rd x=2\pi c_1\int_0^1r\,(1-2\log r)\,\rd r=2\pi c_1.\]
Similarly, for $d=3$ we have
\[J_1\le c_1\,t^{-\al}\int_{|x-x_0|\le t^{\al/2}}\f{\rd x}{|x-x_0|}=4\pi c_1\,t^{-\al}\int_0^{t^{\al/2}}r\,\rd r=2\pi c_1.\]
By a parallel argument, we can estimate $J_2$ as
\[J_2\le V_d\f{d(2-\al)}2\int_{c_2}^\infty s^{\f{d(2-\al)}2-1}\e^{-s}\,\rd s\le V_d\,\Ga\left(1+d\left(1-\f\al2\right)\right)\left(c_2^{-\f{2-\al}2}t^{\f\al2}\right)^d,\]
where $V_d$ denotes the volume of a $d$-dimensional unit ball, and we used the definition of Gamma functions. Substituting all the above estimates into \eqref{eq-est-2'}, we see that $|v(x_0,t)|$ is dominated by a single constant $c_1'>0$ for all $t>0$, and hence
\begin{equation}\label{eq-est-2a}
\|v(x_0,\,\cdot\,)\|_{L^1(t_k-s,t_{k+1}+\de-s)}\le c_1'(t_{k+1}+\de-t_k)\le c_1'\,T=:C_0'.
\end{equation}

For the initial-boundary value problem \eqref{eq-ibvp-v}, we invoke the explicit solution by the eigensystem expansion in Lemma \ref{lem-ibvp}(a). Since we assume $g\in\cD((-\tri)^\ve)$ with some $\ve>0$ in \eqref{eq-asp-g2}, we argue similarly as that in \cite{LLY15} to estimate
\begin{align*}
\|v(\,\cdot\,,t)\|_{\cD((-\tri)^{d/4+\ve})}^2 & =\sum_{n=1}^\infty\left|\la_n^{\f d4}E_{\al,1}(-\la_nt^\al)\right|^2\left|\la_n^\ve(g,\vp_n)\right|^2\\
& \le\left(c_0\,t^{-\f{\al d}4}\right)^2\sum_{n=1}^\infty\left|\f{(\la_nt^\al)^{\f d4}}{1+\la_nt^\al}\right|^2|\la_n^\ve(g,\vp_n)|^2\le\left(c_0\|g\|_{\cD((-\tri)^\ve)}t^{-\f{\al d}4}\right)^2,
\end{align*}
where the Mittag-Leffler function $E_{\al,1}(-\la_nt^\al)$ is treated by \eqref{eq-est-ML}. Then it follows from the embedding $\cD((-\tri)^{\f d4+\ve})\subset H^{\f d2+2\ve}(\Om)\subset\subset C(\ov\Om)$ that
\begin{align*}
|v(x_0,t)| & \le\|v(\,\cdot\,,t)\|_{C(\ov\Om)}\le c_0'\|v(\,\cdot\,,t)\|_{\cD((-\tri)^{d/4+\ve})}\le c_0'c_0\|g\|_{\cD((-\tri)^\ve)}t^{-\f{\al d}4},
\end{align*}
implying
\begin{align}
\|v(x_0,\,\cdot\,)\|_{L^1(t_k-s,t_{k+1}+\de-s)} & \le c_0'c_0\|g\|_{\cD((-\tri)^\ve)}\int_{t_k-s}^{t_{k+1}+\de-s}t^{-\f{\al d}4}\,\rd t\nonumber\\
& \le\f{4c_0'c_0\|g\|_{\cD((-\tri)^\ve)}}{4-\al d}T^{1-\f{\al d}4}=:C_0''.\label{eq-est-2b}
\end{align}
Plugging \eqref{eq-est-2a} and \eqref{eq-est-2b} into \eqref{eq-est-2}, we conclude
\[\|I_j\|_{L^1(t_k,t_{k+1}+\de)}\le C_0\|\rho\|_{L^1(t_{j-1},t_j)},\quad C_0:=\max(C_0',C_0''),\ j=1,\ldots,k.\]
Consequently, we substitute the above inequality and \eqref{eq-est-1} into \eqref{eq-est-0} to deduce
\begin{equation}\label{eq-est-3}
\|\rho\|_{L^1(t_k,t_{k+1})}\le B_\de\left(\|J^{1-\al}u_\rho(x_0,\,\cdot\,)\|_{L^1(t_k,t_{k+1}+\de)}+C_0\|\rho\|_{L^1(0,t_k)}\right)+M\de^2
\end{equation}
for $k=0,1,\ldots,N'$. Especially, taking $k=0$ in \eqref{eq-est-3} yields
\begin{equation}\label{eq-est-3'}
\|\rho\|_{L^1(0,t_1)}\le B_\de\|J^{1-\al}u_\rho(x_0,\,\cdot\,)\|_{L^1(0,t_1+\de)}+M\de^2.
\end{equation}

Now we employ an inductive argument to show for $k=1,2,\ldots,N'+1$ that
\begin{equation}\label{eq-est-4}
\|\rho\|_{L^1(0,t_k)}\le\f{(C_0B_\de+1)^k-1}{C_0B_\de}\left(B_\de\|J^{1-\al}u_\rho(x_0,\,\cdot\,)\|_{L^1(0,t_k+\de)}+M\de^2\right).
\end{equation}
In fact, \eqref{eq-est-4} with $k=1$ is exactly \eqref{eq-est-3'}. Supposing that \eqref{eq-est-4} is valid for some $k\le N'$, we can apply \eqref{eq-est-3} to bound $\|\rho\|_{L^1(0,t_{k+1})}$ as
\begin{align*}
\|\rho\|_{L^1(0,t_{k+1})} & \le\|\rho\|_{L^1(0,t_k)}+B_\de\left(\|J^{1-\al}u_\rho(x_0,\,\cdot\,)\|_{L^1(t_k,t_{k+1}+\de)}+C_0\|\rho\|_{L^1(0,t_k)}\right)+M\de^2\\
& \le(C_0B_\de+1)\f{(C_0B_\de+1)^k-1}{C_0B_\de}\left(B_\de\|J^{1-\al}u_\rho(x_0,\,\cdot\,)\|_{L^1(0,t_k+\de)}+M\de^2\right)\\
& \quad\,+\left(B_\de\|J^{1-\al}u_\rho(x_0,\,\cdot\,)\|_{L^1(t_k,t_{k+1}+\de)}+M\de^2\right)\\
& \le\f{(C_0B_\de+1)^{k+1}-1}{C_0B_\de}\left(B_\de\|J^{1-\al}u_\rho(x_0,\,\cdot\,)\|_{L^1(0,t_{k+1}+\de)}+M\de^2\right),
\end{align*}
which is indeed \eqref{eq-est-4} with $k+1$. Taking $k=N'+1$ in \eqref{eq-est-4} and recalling $t_{N'+1}=T-\de$, we finally arrive at the desired inequality \eqref{eq-stab-b} with the aid of \eqref{eq-est-RL}.
\end{proof}

Next, under the smallness assumption of the observation data, we proceed to the proof of Theorem \ref{thm-stab-log} on the basis of key estimates \eqref{eq-est-3} and \eqref{eq-est-3'}. The main idea is to choose different parameters $\de$ to minimize the estimate on each interval where $D_t^{1-\al}\rho$ keeps the sign.

\begin{proof}[Proof of Theorem $\ref{thm-stab-log}$]
To further characterize the stability described in \eqref{eq-stab-b} of Theorem \ref{thm-stab}(b), it is necessary to give an estimate for the positive constant $B_\de$ defined in \eqref{eq-def-Bd}. Obviously, this equals to giving a lower bound of $\|v(x_0,\,\cdot\,)\|_{L^1(0,\de)}$, where $v$ solves the homogeneous problem \eqref{eq-ivp-v} or \eqref{eq-ibvp-v}. In the sequel, we always assume that $\de>0$ is sufficiently small.

On the other hand, as before we assume that $D_t^{1-\al}\rho$ changes signs at $0<t_1<t_2<\cdots<t_{N'}<T$ ($N'\le N$) on $(0,T)$ and additionally set $t_0:=0$ and $t_{N'+1}:=T-\de_*$. Here $\de_*>0$ is a sufficiently small constant to be determined later.

Write $D:=\|u_\rho(x_0,\,\cdot\,)\|_{L^1(0,T)}$ for simplicity, and recall that $D$ was assumed to be sufficiently small. Since the uniqueness (i.e., the case of $D=0$) for Problem \ref{prob-isp} was proved in \cite{LRY16}, henceforth we may assume $D>0$ without loss of generality.\medskip

(a) As $x_0\in\supp\,g$ and $g\ge0$ is assumed to be continuous in $\BR^d$ or $\ov\Om\,$, we know $g(x_0)>0$. Due to the continuity of the solution, there holds $v(x_0,t)\ge\f{g(x_0)}2$ for sufficiently small $t>0$ and thus
\begin{equation}\label{eq-est-Bd0}
B_\de\le\f2{g(x_0)}\f1\de.
\end{equation}
On the first interval $(0,t_1)$, we substitute \eqref{eq-est-Bd0} into \eqref{eq-est-3'} and use \eqref{eq-est-RL} to obtain
\[\|\rho\|_{L^1(0,t_1)}\le\f{T^{1-\al}B_\de}{\Ga(2-\al)}\|u_\rho(x_0,\,\cdot\,)\|_{L^1(0,t_1+\de)}+M\de^2\le\f{2\,T^{1-\al}}{\Ga(2-\al)\,g(x_0)}\f D
\de+M\de^2\le a_0\left(\f D\de+\de^\al\right),\]
where $a_0:=\max(\f{2\,T^{1-\al}}{\Ga(2-\al)\,g(x_0)},M)$. Since $D$ is sufficiently small, we balance the right-hand side of the above inequality by choosing $\de_1:=D^{\f13}$, which gives
\begin{equation}\label{eq-est-6h}
\|\rho\|_{L^1(0,t_1)}\le2a_0\,\de_1^2=2a_0\,D^{\f23}.
\end{equation}
Although the choice of $\de_1$ here differs from the minimizer up to a multiplier, it gives the same asymptotic behavior with respect to $D$, and henceforth we will adopt such a balancing argument for simplicity.

Now we utilize an inductive argument to show for $k=0,1,\ldots,N'$ that
\begin{equation}\label{eq-est-7h}
\|\rho\|_{L^1(t_k,t_{k+1})}\le2a_k\,D^{(\f23)^{k+1}},\quad\|\rho\|_{L^1(0,t_{k+1})}\le2\sum_{j=0}^ka_j\,D^{(\f23)^{k+1}},
\end{equation}
where $a_k$ is defined inductively by
\begin{equation}\label{eq-def-ak}
a_k:=\max\left\{\f2{g(x_0)}\left(\f{T^{1-\al}}{\Ga(2-\al)}+2C_0\sum_{j=0}^{k-1}a_j\right),M\right\}.
\end{equation}
In fact, \eqref{eq-est-6h} is exactly the case of $k=0$. Supposing that \eqref{eq-est-7h} holds for some $k-1$, we apply \eqref{eq-est-RL} and substitute \eqref{eq-est-7h} with $k-1$ and \eqref{eq-est-Bd0} into \eqref{eq-est-3} to treat the interval $(t_k,t_{k+1})$ as
\begin{align*}
\|\rho\|_{L^1(t_k,t_{k+1})} & \le\f2{g(x_0)}\f1\de\left(\f{T^{1-\al}}{\Ga(2-\al)}\|u_\rho(x_0,\,\cdot\,)\|_{L^1(t_k,t_{k+1}+\de)}+2C_0\sum_{j=0}^{k-1}a_j\,D^{(\f23)^k}\right)+M\de^2\\
& \le\f2{g(x_0)}\left(\f{T^{1-\al}}{\Ga(2-\al)}+2C_0\sum_{j=0}^{k-1}a_j\right)\f1\de D^{(\f23)^k}+M\de^2\le a_k\left(\f1\de D^{(\f23)^k}+\de^2\right),
\end{align*}
where we used the smallness of $D$ to bound $\|u_\rho(x_0,\,\cdot\,)\|_{L^1(t_k,t_{k+1}+\de)}\le D\le D^{(\f23)^k}$, and $a_k$ takes the form of \eqref{eq-def-ak}. Here we choose
\begin{equation}\label{eq-def-de0}
\de_{k+1}:=D^{\f13(\f23)^k}
\end{equation}
to balance the right-hand side of the above inequality, so that
\begin{align*}
\|\rho\|_{L^1(t_k,t_{k+1})} & \le2a_k\,\de_{k+1}^2=2a_k\,D^{(\f23)^{k+1}},\\
\|\rho\|_{L^1(0,t_{k+1})} & \le\|\rho\|_{L^1(0,t_k)}+\|\rho\|_{L^1(t_k,t_{k+1})}\le2\sum_{j=0}^{k-1}a_j\,D^{(\f23)^k}+2a_k\,D^{(\f23)^{k+1}}\le2\sum_{j=0}^ka_j\,D^{(\f23)^{k+1}},
\end{align*}
as claimed in \eqref{eq-est-7h}.

Finally, we take $k=N'$ in \eqref{eq-est-7h} and recall $t_{N'+1}=T-\de_*$ to conclude
\[\|\rho\|_{L^1(0,T-\de_*)}\le C_1\,D^{(\f23)^{N'+1}}\le C_1\,D^{(\f23)^{N+1}},\quad C_1:=2\sum_{j=0}^{N'}a_j.\]
According to \eqref{eq-def-de0}, this is achieved by choosing
\[\de_*=\de_{N'+1}=D^{\f13(\f23)^{N'}},\]
and it follows from the smallness of $D$ that $\de_*$ is also sufficiently small. However, since the sign changes of $\rho$ is independent of $\de_*$, it is possible that the interval $(T-\de_*,T)$ includes several of the points $t_k$ ($1\le k\le N'$). In this case, the induction for \eqref{eq-est-7h} automatically stops before taking $k=N'$, indicating that the above estimate is indeed the worst case.\medskip

(b) If we do not impose $x_0\in\supp\,g$, as before we shall discuss the whole space case \eqref{eq-ivp-v} and the bounded domain case \eqref{eq-ibvp-v} separately to estimate $B_\de$.

First we treat the Cauchy problem \eqref{eq-ivp-v}. Since we assume the compactness of $\supp\,g$, it is readily seen that $\ka_0=\sup_{x\in\supp\,g}|x-x_0|>0$ is finite. Meanwhile, due to $x_0\not\in\ov{\supp\,g}$, for sufficiently small $t>0$ we can substitute the asymptotic behavior \eqref{eq-Fox-asp} into \eqref{eq-ivp-fdm} to deduce
\begin{align}
v(x_0,t) & =\int_{\BR^d}K_\al(x-x_0,t)\,g(x)\,\rd x\nonumber\\
& \ge\f{c_3}2\,t^{-\f{\al d}{2(2-\al)}}\int_{\BR^d}|x-x_0|^{-\f{d(1-\al)}{2-\al}}\exp\left(-c_4\,t^{-\f\al{2-\al}}|x-x_0|^{\f2{2-\al}}\right)g(x)\,\rd x\nonumber\\
& \ge c_5\,t^{-\f{\al d}{2(2-\al)}}\exp\left(-c_6\,t^{-\f\al{2-\al}}\right),\label{eq-est-5}
\end{align}
where
\[c_5:=\f{c_3}2\ka_0^{-\f{d(1-\al)}{2-\al}}\|g\|_{L^1(\BR^d)},\quad c_6:=c_4\,\ka_0^{\f2{2-\al}}.\]
Then we calculate
\begin{align*}
\|v(x_0,\,\cdot\,)\|_{L^1(0,\de)} & \ge c_5\int_0^\de t^{-\f{\al d}{2(2-\al)}}\exp\left(-c_6\,t^{-\f\al{2-\al}}\right)\rd t\\
& =\left(\f2\al-1\right)c_5\,c_6^{\f2\al-1-\f d2}\int_{c_6\de^{-\al/(2-\al)}}^\infty\tau^{-(\f2\al-\f d2)}\e^{-\tau}\,\rd\tau.
\end{align*}
To further estimate the above integral from below, we invoke L'Hospital's rule to see
\[\int_R^\infty\tau^{-\be}\e^{-\tau}\,\rd\tau=R^{-\be}\e^{-R}(1+o(1))\ge\f12R^{-\be}\e^{-R}\]
for $\be>0$ and sufficiently large $R>0$. This implies
\[\int_{c_6\de^{-\al/(2-\al)}}^\infty\tau^{-(\f2\al-\f d2)}\e^{-\tau}\,\rd\tau\ge\f12c_6^{-(\f2\al-\f d2)}\de^{\f{4-\al d}{2(2-\al)}}\exp\left(-c_4\,\de^{-\f\al{2-\al}}\right),\]
which eventually leads us to the lower estimate
\begin{equation}\label{eq-est-de}
\|v(x_0,\,\cdot\,)\|_{L^1(0,\de)}\ge\f{(2-\al)c_5}{2\al\,c_6}\,\de^{\f{4-\al d}{2(2-\al)}}\exp\left(-c_6\,\de^{-\f\al{2-\al}}\right).
\end{equation}

Next we deal with the initial-boundary value problem \eqref{eq-ibvp-v} with $\al=1$ and the assumption \eqref{eq-asp-x0}. Taking $x=x_0$ in the representation \eqref{eq-ibvp-fdm} in Lemma \ref{lem-ibvp}(b), we have
\[v(x_0,t)=v_1(t)+v_2(t),\]
where
\begin{align*}
v_1(t) & :=(4\pi\,t)^{-\f d2}\int_\Om\exp\left(-\f{|x-x_0|^2}{4\,t}\right)g(x)\,\rd x,\\
v_2(t) & :=\int_0^t(4\pi(t-s))^{-\f d2}\int_{\pa\Om}\exp\left(-\f{|y-x_0|^2}{4(t-s)}\right)\pa_\nu v(y,s)\,\rd y\rd s.
\end{align*}
For $v_1(t)$, the same argument as that for \eqref{eq-est-5} yields for sufficiently small $t>0$ that
\begin{equation}\label{eq-est-5'}
v_1(t)\ge c_5'\,t^{-\f d2}\exp(-\ka_0'\,t^{-1}),\quad c_5':=(4\pi)^{-\f d2}\|g\|_{L^1(\Om)},\ \ka_0':=\f{\ka_0^2}4.
\end{equation}
For $v_2(t)$, the Hopf lemma asserts $\pa_\nu v<0$ on $\pa\Om\times(0,T)$ and thus $v_2(t)<0$ by definition. Then it is necessary to give an upper bound for $\pa_\nu v$ in order to estimate $|v_2(t)|$. To this end, we turn to the explicit solution \eqref{eq-ibvp-sol} again and the trace theorem to dominate
\begin{align*}
\|\pa_\nu v(\,\cdot\,,t)\|_{L^1(\pa\Om)} & \le\|\nb v(\,\cdot\,,t)\|_{L^1(\pa\Om)}\le c_\Om\|\nb v(\,\cdot\,,t)\|_{L^2(\pa\Om)}\le c_\Om'\|v(\,\cdot\,,t)\|_{H^{3/2}(\Om)}\\
& =c_\Om'\left(\sum_{n=1}^\infty\left|\la_n^{\f34}\e^{-\la_nt}(g,\vp_n)\right|^2\right)^{\f12}\le c_\Om'\left(\f3{4\,\e\,t}\right)^{\f34}\|g\|_{L^2(\Om)}.
\end{align*}
Therefore, we obtain
\begin{align*}
|v_2(t)| & \le(4\pi)^{-\f d2}\int_0^t(t-s)^{-\f d2}\exp\left(-\f{\inf_{y\in\pa\Om}|y-x_0|^2}{4(t-s)}\right)\|\pa_\nu v(\,\cdot\,,s)\|_{L^1(\pa\Om)}\,\rd s\\
& \le\f{c_\Om'\|g\|_{L^2(\Om)}}{(4\pi)^{\f d2}}\left(\f3{4\,\e}\right)^{\f34}\int_0^t(t-s)^{-\f d2}\exp\left(-\f{\ka_1^2}{4(t-s)}\right)s^{-\f34}\,\rd s\\
& \le\f{4c_\Om'\|g\|_{L^2(\Om)}}{(4\pi)^{\f d2}}\left(\f3{4\,\e}\right)^{\f34}t^{\f14}\int_0^ts^{-\f d2}\exp(-\ka_1'\,s^{-1})\rd s,
\end{align*}
where $\ka_1':=\f14\ka_1^2$ and we applied Young's inequality to the involved convolution. For sufficiently small $t>0$, we use again L'Hospital's rule to obtain
\[\int_0^ts^{-\f d2}\exp\left(-\f{\ka_1'}s\right)\rd s=\f1{\ka_1'}\,t^{2-\f d2}\exp(-\ka_1'\,t^{-1})(1+o(1))\le\f2{\ka_1'}\,t^{2-\f d2}\exp(-\ka_1'\,t^{-1}),\]
indicating
\[|v_2(t)|\le c_6'\,t^{\f{9-2d}4}\exp(-\ka_1'\,t^{-1}),\quad c_6':=\f{8c_\Om'\|g\|_{L^2(\Om)}}{(4\pi)^{\f d2}\ka_1'}\left(\f3{4\,\e}\right)^{\f34}.\]
Combining the above inequality with \eqref{eq-est-5'} yields
\[v(x_0,t)\ge c_5'\,t^{-\f d2}\exp(-\ka_0'\,t^{-1})-c_6'\,t^{\f{9-2d}4}\exp(-\ka_1'\,t^{-1})\]
for sufficiently small $t>0$. Thanks to the assumption \eqref{eq-asp-x0}, there holds $\ka_0'\le\ka_1'$, implying that the term $t^{-\f d2}\exp(-\ka_0'\,t^{-1})$ is dominating and thus
\[v(x_0,t)\ge\f{c_5'}2\,t^{-\f d2}\exp(-\ka_0'\,t^{-1})\]
for sufficiently small $t>0$. Repeating the same argument using L'Hospital rule, we conclude
\begin{align*}
\|v(x_0,\,\cdot\,)\|_{L^1(0,\de)} & \ge\f{c_5'}2\int_0^\de t^{-\f d2}\exp(-\ka_0'\,t^{-1})\,\rd t=\f{c_5'}{2\ka_0'}\,\de^{2-\f d2}\exp(-\ka_0'\,\de^{-1})(1+o(1))\\
& \ge\f{c_5'}{4\ka_0'}\,\de^{2-\f d2}\exp(-\ka_0'\,\de^{-1}).
\end{align*}
Noting that the above inequality coincides well with \eqref{eq-est-de} for the whole space case, finally we obtain a uniform lower bound for $\|v(x_0,\,\cdot\,)\|_{L^1(0,\de)}$ in both cases, which implies the following estimate for $B_\de$:
\[B_\de\le C_1'\,\de^{\f{\al d-4}{2(2-\al)}}\exp(\ka'\,\de^{-\f\al{2-\al}}),\quad C_1':=\max\left(\f{(2-\al)c_5}{2\al\,c_6},\f{4\ka_0'}{c_5'}\right),\ \ka':=\max(c_6,\ka_0').\]
Since the growth rate of the term $\exp(\ka'\,\de^{-\f\al{2-\al}})$ is much faster than any negative power of $\de$ as $\de\downarrow0$, there exists a constant $\ka>\ka'$ such that for sufficiently small $\de>0$, there holds
\begin{equation}\label{eq-est-Bd}
B_\de\le C_1'\,\de^2\exp(\ka\,\de^{-\f\al{2-\al}}).
\end{equation}

Now we can follow the same line as that for the case of $x_0\in\supp\,g$ to prove \eqref{eq-stab-d}. We start from the first interval $(0,t_1)$ by using \eqref{eq-est-3'} and \eqref{eq-est-Bd} to estimate
\begin{align*}
\|\rho\|_{L^1(0,t_1)} & \le\f{C_1'\,T^{1-\al}}{\Ga(2-\al)}D\,\de^2\exp(\ka\,\de^{-\f\al{2-\al}})+M\de^2\\
& \le b_0\left(\f\al{2(2-\al)}\right)^{2(\f2\al-1)}\de^2\left(D\exp(\ka\,\de^{-\f\al{2-\al}})+1\right),
\end{align*}
where $b_0:=(2(\f2\al-1))^{2(\f2\al-1)}\max(\f{C_1'\,T^{1-\al}}{\Ga(2-\al)},M)$. Since $D$ is sufficiently small, we can choose
\[\de_1:=\left(\f\ka{|\log D|}\right)^{\f2\al-1}.\]
to balance the right-hand side of the above inequality. Then $\de_1$ is also sufficiently small, and consequently
\begin{equation}\label{eq-est-6}
\|\rho\|_{L^1(0,t_1)}\le 2b_0\left(\f\al{2(2-\al)}\right)^{2(\f2\al-1)}\de_1^2=2b_0\left(\f{\ka\,\al}{2(2-\al)}\right)^{2(\f2\al-1)}|\log D|^{-2(\f2\al-1)}.
\end{equation}

Now we shall take advantage of the induction to show for $k=0,1,\ldots,N'$ that
\begin{align}
& \|\rho\|_{L^1(t_k,t_{k+1})}\le2b_k\left(\f{\ka\,\al}{2(2-\al)}\right)^{2(\f2\al-1)}(\underbrace{\log(\cdots(\log|\log}_{k+1}D|)\cdots))^{-2(\f2\al-1)},\label{eq-est-7}\\
& \|\rho\|_{L^1(0,t_{k+1})}\le2\left(\f{\ka\,\al}{2(2-\al)}\right)^{2(\f2\al-1)}\sum_{j=0}^kb_j\,(\underbrace{\log(\cdots(\log|\log}_{k+1}D|)\cdots))^{-2(\f2\al-1)},\label{eq-est-7'}
\end{align}
where $b_k$ is defined inductively by
\begin{equation}\label{eq-def-bk}
b_k:=\max\left\{C_1'\left(\f{T^{1-\al}}{\Ga(2-\al)}+2C_0\left(\f{\ka\,\al}{2(2-\al)}\right)^{2(\f2\al-1)}\sum_{j=0}^{k-1}b_j\right),M\right\}.
\end{equation}
In fact, \eqref{eq-est-6} is exactly the case of $k=0$. Now suppose that \eqref{eq-est-7} and \eqref{eq-est-7'} hold for some $k-1$, and we are in a position to proceed to the case of $k$. Applying \eqref{eq-est-3}, \eqref{eq-est-Bd} and \eqref{eq-est-RL}, we obtain
\begin{align*}
\|\rho\|_{L^1(t_k,t_{k+1})} & \le C_1'\,\de^2\exp(\ka\,\de^{-\f\al{2-\al}})\left(\f{T^{1-\al}}{\Ga(2-\al)}\|u_\rho(x_0,\,\cdot\,)\|_{L^2(t_k,t_{k+1}+\de)}+C_0\|\rho\|_{L^1(0,t_k)}\right)\\
& \quad\,+M\de^2.
\end{align*}
Then we utilize \eqref{eq-est-7'} with $k-1$ and the smallness of $D$ to further bound
\begin{align*}
& \quad\,\f{T^{1-\al}}{\Ga(2-\al)}\|u_\rho(x_0,\,\cdot\,)\|_{L^1(t_k,t_{k+1}+\de)}+C_0\|\rho\|_{L^1(0,t_k)}\\
& \le\f{T^{1-\al}}{\Ga(2-\al)}D+2C_0\left(\f{\ka\,\al}{2(2-\al)}\right)^{2(\f2\al-1)}\sum_{j=0}^{k-1}b_j\,(\underbrace{\log(\cdots|\log}_kD|\cdots))^{-2(\f2\al-1)}\\
& \le\left(\f{T^{1-\al}}{\Ga(2-\al)}+2C_0\left(\f{\ka\,\al}{2(2-\al)}\right)^{2(\f2\al-1)}\sum_{j=0}^{k-1}b_j\right)(\underbrace{\log(\cdots|\log}_kD|\cdots))^{-2(\f2\al-1)}.
\end{align*}
This indicates
\[\|\rho\|_{L^1(t_k,t_{k+1})}\le b_k\,\de^2\left(\exp(\ka\,\de^{-\f\al{2-\al}})(\underbrace{\log(\cdots|\log}_kD|\cdots))^{-2(\f2\al-1)}+1\right),\]
where $b_k$ is exactly \eqref{eq-def-bk}. As before, we choose
\begin{equation}\label{eq-def-de}
\de_{k+1}:=\left(\f{\ka\,\al}{2(2-\al)}\right)^{\f2\al-1}(\underbrace{\log(\cdots(\log|\log}_{k+1}D|)\cdots))^{-(\f2\al-1)}
\end{equation}
to balance the right-hand side of the above inequality, which yields
\[\|\rho\|_{L^1(t_k,t_{k+1})}\le2b_k\,\de_{k+1}^2=2b_k\left(\f{\ka\,\al}{2(2-\al)}\right)^{2(\f2\al-1)}(\underbrace{\log(\cdots(\log|\log}_{k+1}D|)\cdots))^{-2(\f2\al-1)}\]
or equivalently \eqref{eq-est-7}. In addition, the combination of \eqref{eq-est-7} with $k$ and \eqref{eq-est-7'} with $k-1$ implies \eqref{eq-est-7'} with $k$ immediately.

As a final step, we take $k=N'$ in \eqref{eq-est-7'} and recall $t_{N'+1}=T-\de_*$ to conclude
\[\|\rho\|_{L^1(0,T-\de_*)}\le C_2(\underbrace{\log(\cdots(\log|\log}_{N'+1}D|)\cdots))^{-2(\f2\al-1)},\]
where
\[C_2:=2\left(\f{\ka\,\al}{2(2-\al)}\right)^{2(\f2\al-1)}\sum_{j=0}^kb_j.\]
Meanwhile, now it is clear that the choice of $\de_*$ should be \eqref{eq-def-de} with $k=N'$, that is,
\[\de_*:=\left(\f{\ka\,\al}{2(2-\al)}\right)^{\f2\al-1}(\underbrace{\log(\cdots(\log|\log}_{N'+1}D|)\cdots))^{-(\f2\al-1)}.\]
Consequently, noting the fact that $N'\le N$, we arrive at the desired estimate \eqref{eq-stab-d} and the proof is completed.
\end{proof}

\Section{Numerical Reconstruction Method}\label{sec-recon}

This section is devoted to the establishment of an iteration method for the numerical treatment of Problem \ref{prob-isp}. Henceforth, we will only deal with problem \eqref{eq-ibvp-u} in a bounded domain $\Om$ with $0<\al<1$ for technical convenience. We denote the true solution of Problem \ref{prob-isp} by $\rho_*$, and write $u_*(t):=u_{\rho_*}(x_0,t)$ as the noiseless observation data. The choice of $x_0\in\Om$ can be arbitrary, i.e., we do not require $x_0\in\supp\,g$.

First we mention the coincidence of the current problem with the classical deconvolution problem, which is readily seen by taking $x=x_0$ in \eqref{eq-Duhamel-J}:
\begin{equation}\label{eq-Duhamel-J'}
J^{1-\al}u_\rho(x_0,t)=\int_0^t\rho(t-s)\,v(x_0,s)\,\rd s.
\end{equation}
Indeed, the deconvolution appears frequently as a standard example of ill-posed problems (see, e.g., \cite{KS05}). However, regardless of the plenary amount of existing approaches to the deconvolution problem, here we attempt to develop a specialized method taking advantage of the underlying fractional diffusion equation.

Practically, we are only given the discrete measurement data of $u_*$ and have to work on the discretized setting. For simplicity, we divide the time interval $[0,T]$ into an equidistant partition $0=s_0<s_1<\cdots<s_L=T$, i.e., $s_\ell=\f TL\ell$ ($\ell=0,1,\ldots,L$). Then the reconstruction of $\rho_*$ reduces to the determination of a vector $(\rho_*(s_0),\rho_*(s_1),\ldots,\rho_*(s_L))\in\BR^{L+1}$. As a constraint on this vector, we assume
\[\rho_*(s_0)=0,\quad|\rho_*(s_\ell)|\le M,\ \ell=1,2,\ldots,L,\]
where $M>0$ is a given constant. In other words, we require that the true solution $\rho_*$ is bounded and vanishes at the origin, which is reasonable in practice. In order to prove the convergence of the iteration method proposed later, we shall interpret $\rho_*$ as an infinitely differentiable function. More precisely, we assume
\begin{equation}\label{eq-asp-rho*}
\rho_*\in C^\infty[0,T],\quad\rho_*^{(m)}(0)=0,\ \|\rho_*^{(m)}\|_{C[0,T]}\le M\left(\f{8L}T\right)^m,\ \forall\,m=0,1,2,\ldots.
\end{equation}
This is always possible by applying appropriate interpolation methods to the discrete pairs $\{(s_\ell,\rho_*(s_\ell))\}_{\ell=0}^L$. Correspondingly, the resulting solution $u_{\rho_*}$ is also smooth and especially $u_*=u_{\rho_*}(x_0,\,\cdot\,)\in C^\infty[0,T]$. On the other hand, since the known spatial component $g$ is also given on grid points, we can similarly regard $g$ as a sufficiently smooth function, so that the solution $v$ to the homogeneous problem \eqref{eq-ibvp-v} is continuous. This, together with the non-negativity of $g$, gives a constant $K>0$ such that
\begin{equation}\label{eq-asp-v}
0\le v(x_0,t)\le K,\quad0\le t\le T.
\end{equation}

Now we are well prepared to propose the fixed point iteration
\begin{equation}\label{eq-def-itr}
\rho_m=\left\{\!\begin{alignedat}{2}
& 0, & \quad & m=0,\\
& \rho_{m-1}+\f{\pa_t^\al(u_*-u_{\rho_{m-1}}(x_0,\,\cdot\,))}K, & \quad & m=1,2,\ldots,
\end{alignedat}\right.
\end{equation}
where $K>0$ refers to the same constant in \eqref{eq-asp-v}. The convergence of \eqref{eq-def-itr} is guaranteed by the following theorem.

\begin{thm}\label{thm-cov}
Under the assumptions \eqref{eq-asp-rho*} and \eqref{eq-asp-v}, the sequence $\{\rho_m\}_{m=0}^\infty$ generated by the iteration \eqref{eq-def-itr} converges uniformly to the true solution $\rho_*$ in $C[0,T]$, i.e.,
\[\lim_{m\to\infty}\|\rho_m-\rho_*\|_{C[0,T]}=0.\]
\end{thm}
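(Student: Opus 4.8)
The plan is to turn the iteration into a linear fixed‑point recursion for the error and then analyze the iterated operator acting on the real‑analytic target $\rho_*$. Set $e_m:=\rho_*-\rho_m$ and $w(t):=v(x_0,t)$, so that $0\le w\le K$ by \eqref{eq-asp-v}. Since $u_\rho(x_0,\,\cdot\,)$ depends linearly on $\rho$, introduce the linear operator $\mathcal A\rho:=\pa_t^\al u_\rho(x_0,\,\cdot\,)$. Then \eqref{eq-def-itr} reads $\rho_m=\rho_{m-1}+\frac1K\mathcal A(\rho_*-\rho_{m-1})$, whence $e_m=(I-\frac1K\mathcal A)e_{m-1}$, and because $\rho_0=0$ we obtain $e_m=(I-\frac1K\mathcal A)^m\rho_*$. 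Thus everything reduces to proving $(I-\frac1K\mathcal A)^m\rho_*\to0$ in $C[0,T]$.

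The next step is to make $\mathcal A$ and its powers explicit. From \eqref{eq-Duhamel-J'} we have $J^{1-\al}u_\rho(x_0,t)=(\rho*w)(t)$, and since $u_\rho(x_0,0)=0$ this gives $\mathcal A\rho=D_t^\al u_\rho(x_0,\,\cdot\,)=\frac{\rd}{\rd t}(\rho*w)$. Here the role of assumption \eqref{eq-asp-rho*} becomes clear: it forces $\rho_*^{(j)}(0)=0$ for every $j$, so that repeated application of the identity $(f*w)'=w*f'+f(0)\,w$ collapses all boundary contributions. In the principal case $x_0\notin\supp g$ (so $w(0)=g(x_0)=0$) this yields the clean formula $\mathcal A^j\rho_*=w^{*j}*\rho_*^{(j)}$, the $j$‑fold convolution of $w$ with the $j$‑th derivative of $\rho_*$ (for $g(x_0)\ne0$ a binomial expansion with lower‑order factors $g(x_0)^i$ appears and is handled analogously). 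Consequently
\[e_m=\Bigl(I-\tfrac1K\mathcal A\Bigr)^m\rho_*=\sum_{j=0}^m\binom mj\Bigl(-\tfrac1K\Bigr)^j\,w^{*j}*\rho_*^{(j)}.\]

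For the termwise estimates I would use $0\le w\le K$ together with the simplex‑volume bound $w^{*j}(t)\le K^j t^{j-1}/(j-1)!$ to get $\|w^{*j}\|_{L^1(0,T)}\le K^jT^j/j!$, and the analytic bound $\|\rho_*^{(j)}\|_{C[0,T]}\le M(8L/T)^j$ from \eqref{eq-asp-rho*}. Young's inequality then gives $\|w^{*j}*\rho_*^{(j)}\|_{C[0,T]}\le M(8KL)^j/j!$, so the $j$‑th summand of $e_m$ is controlled by $M\binom mj(8L)^j/j!$; the factor $8$ in \eqref{eq-asp-rho*} is exactly the slack this step consumes.

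The hard part will be the passage to the limit. The crude triangle inequality applied to the alternating series diverges as $m\to\infty$, because the binomial coefficients $\binom mj$ overwhelm the factorial gain; hence the cancellation carried by the signs $(-1/K)^j$ \emph{must} be retained. The genuine obstacle is structural: when $x_0\notin\supp g$ one has $w(0)=0$, the Volterra operator $\mathcal A$ is quasinilpotent, and the spectral radius of $I-\frac1K\mathcal A$ equals $1$, so no geometric contraction is available and the convergence is intrinsically slow. I would finish by one of two routes. The first is a resummation of the alternating series above, recognizing it as $(I-\frac1K\mathcal A)^m$ applied to an analytic datum and comparing it with $\exp(-\frac mK\mathcal A)$‑type decay, for which the derivative bounds in \eqref{eq-asp-rho*} provide the required uniform control. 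The second, softer, route is to show that $I-\frac1K\mathcal A$ is non‑expansive on $L^2(0,T)$ (using the positivity $0\le w\le K$ of the diffusion kernel), deduce weak convergence $e_m\rightharpoonup0$ from the injectivity of the deconvolution $\rho\mapsto w*\rho$—equivalently the uniqueness proved in \cite{LRY16}—and then upgrade to uniform convergence via Arzel\`a--Ascoli, the equicontinuity being furnished by the uniform derivative bounds that the representation for $e_m$ supplies. The technical heart of the argument is precisely establishing this non‑expansiveness and converting the marginal, spectral‑radius‑one weak decay into the required $C[0,T]$ convergence.
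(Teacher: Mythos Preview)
Your setup is correct through the identity $e_m=(I-\tfrac1K\mathcal A)^m\rho_*$ and the formula $\mathcal A\rho=w*\rho'$ for $\rho(0)=0$ (incidentally, this last formula follows from $\rho_*^{(j)}(0)=0$ alone and does \emph{not} need $w(0)=0$; your remark about the case $g(x_0)\ne0$ is a red herring). The gap is the binomial expansion. By separating $I$ from $\tfrac1K\mathcal A$ you destroy precisely the structure that makes the argument work, and you correctly diagnose that the resulting alternating sum cannot be controlled termwise. Neither proposed remedy is a proof: the ``resummation'' route is not carried out, and the $L^2$ route is doubtful because no reason is given for $I-\tfrac1K\mathcal A$ to be non-expansive on $L^2(0,T)$ (positivity of $w$ with $0\le w\le K$ does not by itself make $\rho\mapsto\tfrac1K(w*\rho)'$ a contractive perturbation of the identity), nor for the iterates $e_m$ to be equicontinuous uniformly in $m$.

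The paper avoids the whole difficulty by never expanding. The key observation is that for $\rho$ with $\rho(0)=0$,
\[
\Bigl(I-\tfrac1K\mathcal A\Bigr)\rho(t)=\int_0^t\rho'(s)\,\rd s-\tfrac1K\int_0^t\rho'(s)\,w(t-s)\,\rd s=\int_0^t\rho'(s)\,\Phi_1(t-s)\,\rd s,
\]
with $\Phi_1:=1-w/K$. Thus $I-\tfrac1K\mathcal A$ is itself ``differentiate once, then convolve with the \emph{nonnegative} kernel $\Phi_1\le1$''. Iterating, using $\rho_*^{(j)}(0)=0$ at each step, gives the single-term representation
\[
e_m(t)=\int_0^t\rho_*^{(m)}(s)\,\Phi_m(t-s)\,\rd s,\qquad \Phi_m:=\Phi_1^{*m},
\]
and since $0\le\Phi_1\le1$ one has $0\le\Phi_m(t)\le t^{m-1}/(m-1)!$. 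Combining this with $\|\rho_*^{(m)}\|_{C[0,T]}\le M(8L/T)^m$ yields $\|e_m\|_{C[0,T]}\le M(8L)^m/m!\to0$. In convolution-algebra terms this \emph{is} the resummation of your binomial series (write $\rho_*^{(j)}=1^{*(m-j)}*\rho_*^{(m)}$ and collect to get $(1-w/K)^{*m}$), but the point is that one should recognize $\Phi_1$ from the outset rather than expand and then try to re-sum.
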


\begin{proof}
Let the assumptions \eqref{eq-asp-rho*} and \eqref{eq-asp-v} be satisfied. First it is obvious that for $\rho\in C^1[0,T]$ with $\rho(0)=0$, there holds
\begin{equation}\label{eq-rep-Caputo}
\pa_t^\al u_\rho(x_0,t)=\int_0^t\rho'(s)\,v(x_0,t-s)\,\rd s.
\end{equation}
Actually, taking the usual time derivative in \eqref{eq-Duhamel-J'}, by definition and $\rho(0)=0$ we have
\[D_t^\al u_\rho(x_0,t)=\f\rd{\rd t}J^{1-\al}u_\rho(x_0,t)=\int_0^t\rho'(t-s)\,v(x_0,s)\,\rd s.\]
Due to the homogeneous initial condition of $u_\rho$, Lemma \ref{lem-Caputo-RL} indicates $\pa_t^\al u_\rho(x_0,t)=D_t^\al u_\rho(x_0,t)$ and thus \eqref{eq-rep-Caputo} immediately.

On the other hand, investigating the residue $\rho_*-\rho_m$ produced by \eqref{eq-def-itr}, formally we can write
\[\rho_*-\rho_m=\left(I-\f1K\pa_t^\al\right)^m\rho_*,\quad\forall\,m=0,1,2,\ldots.\]
We shall show by induction that
\begin{equation}\label{eq-residue}
(\rho_*-\rho_m)(t)=\int_0^t\rho_*^{(m)}(s)\,\Phi_m(t-s)\,\rd s,\quad m=1,2,\ldots,
\end{equation}
where
\begin{equation}\label{eq-def-Phi}
\Phi_m(t):=\left\{\!\begin{alignedat}{2}
& 1-\f1Kv(x_0,t), & \quad & m=1,\\
& \int_0^t\Phi_{m-1}(t-s)\left(1-\f1Kv(x_0,s)\right)\rd s, & \quad & m=2,3,\ldots.
\end{alignedat}\right.
\end{equation}
We start from $m=1$. Thanks to the assumption \eqref{eq-asp-rho*}, we can employ \eqref{eq-rep-Caputo} to deduce
\[(\rho_*-\rho_1)(t)=\int_0^t\rho_*'(s)\,\rd s-\f1K\int_0^t\rho_*'(s)\,v(x_0,t-s)\,\rd s=\int_0^t\rho_*'(s)\,\Phi_1(t-s)\,\rd s.\]
Now assume that \eqref{eq-residue} holds true for some $m=1,2,\ldots$, namely,
\[(\rho_*-\rho_m)(t)=\left(I-\f1K\pa_t^\al\right)^m\rho_*(t)=\int_0^t\rho_*^{(m)}(s)\,\Phi_m(t-s)\,\rd s=\int_0^t\rho_*^{(m)}(t-s)\,\Phi_m(s)\,\rd s.\]
We proceed to the case of $m+1$. By the assumption $\rho_*^{(m)}(0)=0$, we calculate
\[(\rho_*-\rho_m)'(t)=\int_0^t\rho_*^{(m+1)}(t-s)\,\Phi_m(s)\,\rd s=\int_0^t\rho_*^{(m+1)}(s)\,\Phi_m(t-s)\,\rd s.\]
As $(\rho_*-\rho_m)(0)=0$, again we take advantage of \eqref{eq-rep-Caputo} to obtain
\[\pa_t^\al(\rho_*-\rho_m)(t)=\int_0^tv(x_0,t-s)\int_0^s\rho_*^{(m+1)}(\tau)\,\Phi_m(s-\tau)\,\rd\tau\rd s.\]
Since $\rho_*^{(m)}(0)=0$ also gives $\rho_*^{(m)}(s)=\int_0^s\rho_*^{(m+1)}(\tau)\,\rd\tau$ for any $s>0$, we further calculate
\begin{align}
(\rho_*-\rho_{m+1})(t) & =\left(I-\f1K\pa_t^\al\right)\left(I-\f1K\pa_t^\al\right)^m\rho_*(t)=\left(I-\f1K\pa_t^\al\right)(\rho_*-\rho_m)(t)\nonumber\\
& =\int_0^t\left(\int_0^s\rho_*^{(m+1)}(\tau)\,\rd\tau\right)\Phi_m(t-s)\,\rd s\nonumber\\
& \quad\,-\f1K\int_0^tv(x_0,t-s)\int_0^s\rho_*^{(m+1)}(\tau)\,\Phi_m(s-\tau)\,\rd\tau\rd s\label{eq-cov-1}\\
& =\int_0^t\rho_*^{(m+1)}(\tau)\int_\tau^t\left(\Phi_m(t-s)-\f1Kv(x_0,t-s)\,\Phi_m(s-\tau)\right)\rd s\rd\tau\label{eq-cov-2}\\
& =\int_0^t\rho_*^{(m+1)}(\tau)\int_\tau^t\Phi_m(s-\tau)\left(1-\f1Kv(x_0,t-s)\right)\rd s\rd\tau\nonumber\\
& =\int_0^t\rho_*^{(m+1)}(\tau)\,\Phi_{m+1}(t-\tau)\,\rd\tau\nonumber,
\end{align}
where $\Phi_{m+1}$ takes the exact form of \eqref{eq-def-Phi} with $m+1$. Here we changed the order of integration in \eqref{eq-cov-1}, and applied the obvious identity
\[\int_\tau^t\Phi_m(t-s)\,\rd s=\int_\tau^t\Phi_m(s-\tau)\,\rd s\]
in \eqref{eq-cov-2}. Consequently, the claim \eqref{eq-residue} holds for all $m=1,2,\ldots$. Meanwhile, the assumption \eqref{eq-asp-v} implies
\[0\le1-\f1Kv(x_0,t)=\Phi_1(t)\le1,\quad0\le t\le T.\]
According to the inductive relation \eqref{eq-def-Phi}, it is straightforward to obtain
\[0\le\Phi_m(t)\le\f{t^{m-1}}{(m-1)!},\quad\forall\,m=1,2,\ldots.\]
Applying the above inequality and \eqref{eq-asp-rho*} to \eqref{eq-residue}, for any $t\in[0,T]$ we estimate
\begin{align*}
|(\rho_*-\rho_m)(t)| & \le\int_0^t|\rho_*^{(m)}(s)|\,\Phi_m(t-s)\,\rd s\le M\left(\f{8L}T\right)^m\int_0^t\f{s^{m-1}}{(m-1)!}\,\rd s\\
& =M\left(\f{8L}T\right)^m\f{t^m}{m!}\le\f{M(8L)^m}{m!}\to0\quad(m\to\infty).
\end{align*}
Therefore, we conclude $\|\rho_*-\rho_m\|_{C[0,T]}\to0$ as $m\to\infty$ and the proof is completed.
\end{proof}

In practice, we implement the iteration \eqref{eq-def-itr} in the following way. First, we solve the homogeneous problem \eqref{eq-ibvp-v} with the given spatial component $g$. This not only provides us with the data of $v(x_0,t)$, but also gives the upper bound $K$ in \eqref{eq-asp-v} which will be used repeatedly in the iteration. Next, from the observation data $u_*=u_{\rho_*}(x_0,\,\cdot\,)$ we calculate its Caputo derivative $\pa_t^\al u_*$. Since $\rho_0=0$ implies $\rho_1=\f1K\pa_t^\al u_*$, we can start the iteration from $m=1$ directly. Owing to the representation \eqref{eq-rep-Caputo}, eventually we can rewrite \eqref{eq-def-itr} as
\begin{equation}\label{eq-itr}
\rho_{m+1}(t)=\left\{\!\begin{alignedat}{2}
& \f1K\pa_t^\al u_*(t), & \quad & m=0,\\
& (\rho_1+\rho_m)(t)-\f1K\int_0^t\rho_m'(s)\,v(x_0,t-s)\,\rd s, & \quad & m=1,2,\ldots.
\end{alignedat}\right.
\end{equation}
Then it becomes obvious that at each iterative update, the main computational costs only involve the numerical differentiation of $\rho_{m-1}$ and its convolution with $v(x_0,\,\cdot\,)$. Therefore, throughout the reconstruction procedure, it suffices to solve only one fractional diffusion equation \eqref{eq-ibvp-v} offline, and the remaining iteration is accomplished by repeating one-dimensional data manipulations. As a result, one can expect that the proposed iterative update \eqref{eq-itr} is extremely efficient. The treatment for the ill-posedness in the numerical differentiation will be explained in Section \ref{sec-numer}.

We conclude this section with stating the main algorithm for the numerical reconstruction of Problem \ref{prob-isp}.

\begin{algo}\label{algo-itr}
Let the spatial component $g(x)$ ($x\in\Om$), the observation point $x_0\in\Om$ and the observation data $u_*(t)=u_{\rho_*}(x_0,t)$ ($0\le t\le T$) be given. Fix the stopping criteria $\ve>0$ and set $m=0$, $\rho_0=0$.
\begin{enumerate}
\item Solve the homogeneous problem \eqref{eq-ibvp-v} to obtain $v(x_0,t)$ and fix a constant $K>0$ satisfying \eqref{eq-asp-v}.
\item Compute $\rho_{m+1}$ by the iterative update \eqref{eq-itr}.
\item If $\|\rho_{m+1}-\rho_m\|_{L^2(0,T)}\le\ve$, then stop the iteration. Otherwise, update $m\leftarrow m+1$ and return to Step 2.
\end{enumerate}
\end{algo}

\Section{Numerical Experiments}\label{sec-numer}

In this section, we implement Algorithm \ref{algo-itr} proposed above to evaluate its numerical performance by several examples.

We start from the general settings of the numerical reconstruction. Throughout this section, basically we fix the fractional order $\al=0.9$ unless specified otherwise. Since both unknown function and observation data are variables in time, the spatial dimension is unimportant, so that it suffices to consider $d=1$ and even take $\Om=(0,1)$ without loss of generality. We set $T=1$ and choose the spatial component of the source term as
\[g(x)=\left\{\!\begin{alignedat}{2}
& \sin\left(2\pi x-\f\pi2\right), & \quad & \f14<x<\f34,\\
& 0, & \quad & \mbox{else}.
\end{alignedat}\right.\]
Then it is readily seen that $g\in H_0^1(0,1)$. Especially, we take the observation point as $x_0=\f18$ so that $x_0\not\in\supp\,g$. In this case, we can integrate by parts to further simplify \eqref{eq-itr} as
\begin{equation}\label{eq-itr'}
\rho_{m+1}(t)=\left\{\!\begin{alignedat}{2}
& \f1K\pa_t^\al u_*(t), & \quad & m=0,\\
& (\rho_1+\rho_m)(t)-\f1K\int_0^t\rho_m(t-s)\,\pa_sv(x_0,s)\,\rd s, & \quad & m=1,2,\ldots,
\end{alignedat}\right.
\end{equation}
which even circumvents the numerical differentiation at each step. Note that in this case, $\pa_tv(x_0,\,\cdot\,)$ makes sense in $L^1(0,T)$ owing to Lemma \ref{lem-ibvp}(c). On the other hand, we set the stopping criteria $\ve=10^{-5}$ in the algorithm, and denotes the reconstruction result as $\ov\rho$.

In the numerical experiments, we test Algorithm \ref{algo-itr} with two choices of true solutions to Problem \ref{prob-isp}, namely, a smooth one
\begin{equation}\label{eq-true1}
\rho_*(t)=\sin2\pi t+10\,t,
\end{equation}
and a non-smooth one
\begin{equation}\label{eq-true2}
\rho_*(t)=\left\{\!\begin{alignedat}{2}
& 3t, & \quad & 0\le t\le\f13,\\
& 1, & \quad & \f13<t<\f23,\\
& 3t-1, & \quad & \f23\le t\le1.
\end{alignedat}\right.
\end{equation}
To solve the forward problems for the observation data $u_*(t)$ and the solution $v$ to the homogeneous problem \eqref{eq-ibvp-v}, we discretize the space-time region $\ov\Om\times[0,T]=[0,1]^2$ into $64\times128$ equidistant meshes, and utilize the L1 time stepping method (see \cite{JinLazarovZhou:L1,LinXu:2007}). After computing the value of $v(x_0,t)$, we choose $K=0.2$ according to \eqref{eq-asp-v}.\medskip

First we evaluate the numerical performance of Algorithm \ref{algo-itr} with the noiseless data $u_*$ generated by the true solutions. In this case, we do not need any special treatments for the numerical differentiation involved in \eqref{eq-itr} or \eqref{eq-itr'}. The numerical results for the true solutions defined in \eqref{eq-true1} and \eqref{eq-true2} are displayed in Figure \ref{fig-noiseless}, which are obtained from the iteration \eqref{eq-itr} after $1348$ and $1305$ iterations respectively. For the smooth solution \eqref{eq-true1}, Figure \ref{fig-noiseless} (left) shows the convergence of the sequence $\{\rho_m\}_{m=0}^\infty$, which confirm Theorem \ref{thm-cov}. Furthermore, we attempt the true solution \eqref{eq-true2} which does not belong to $C^\infty[0,T]$. However, Figure \ref{fig-noiseless} (right) illustrates that Algorithm \ref{algo-itr} still works, which means the smoothness restriction on $\rho$ can be suitably weakened in the numerical reconstruction.
\begin{figure}[htbp]\centering
\includegraphics[trim=3cm 2mm 3cm 1cm,clip=true,width=.47\textwidth]{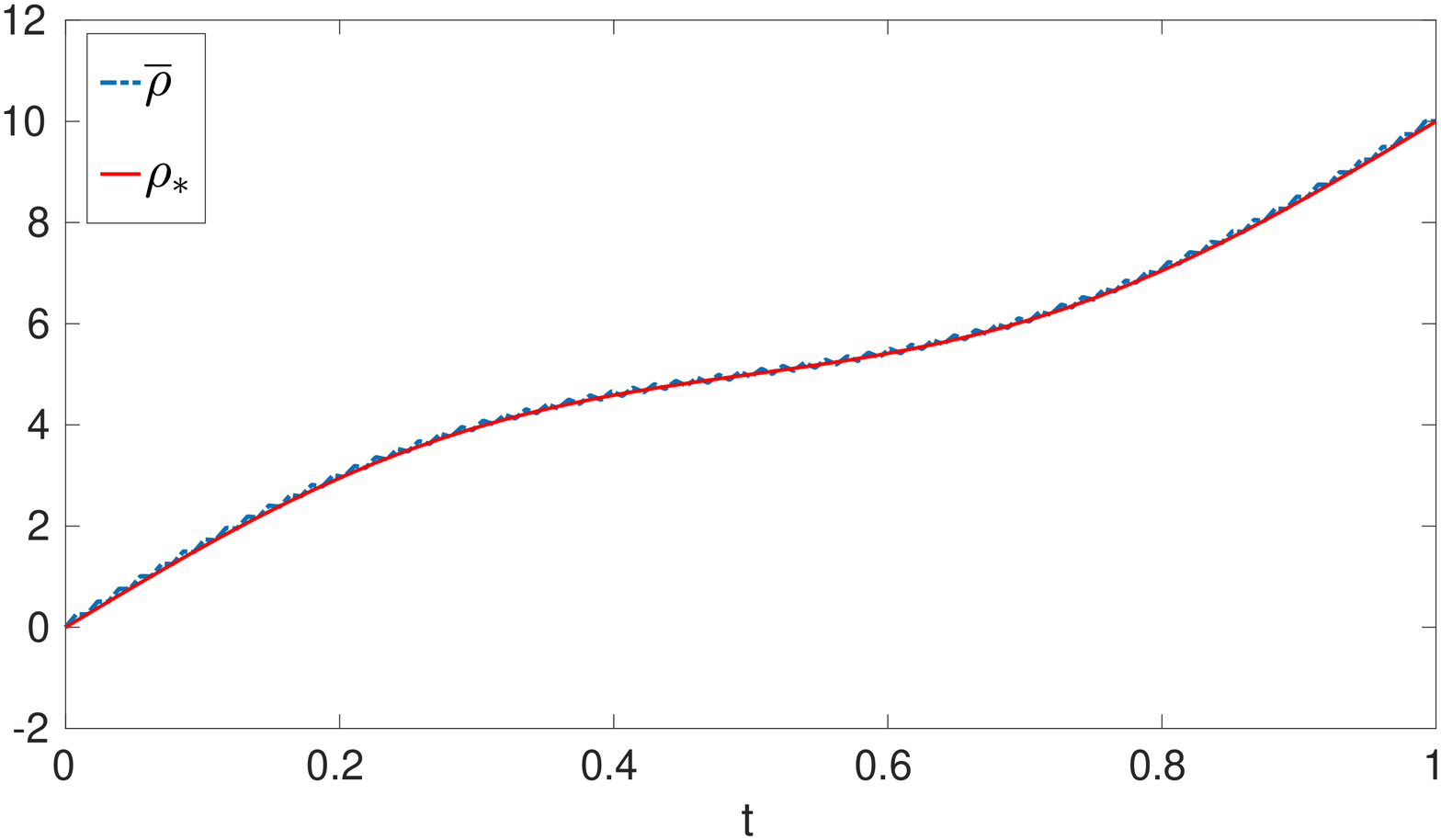}\qquad
\includegraphics[trim=3cm 2mm 3cm 1cm,clip=true,width=.47\textwidth]{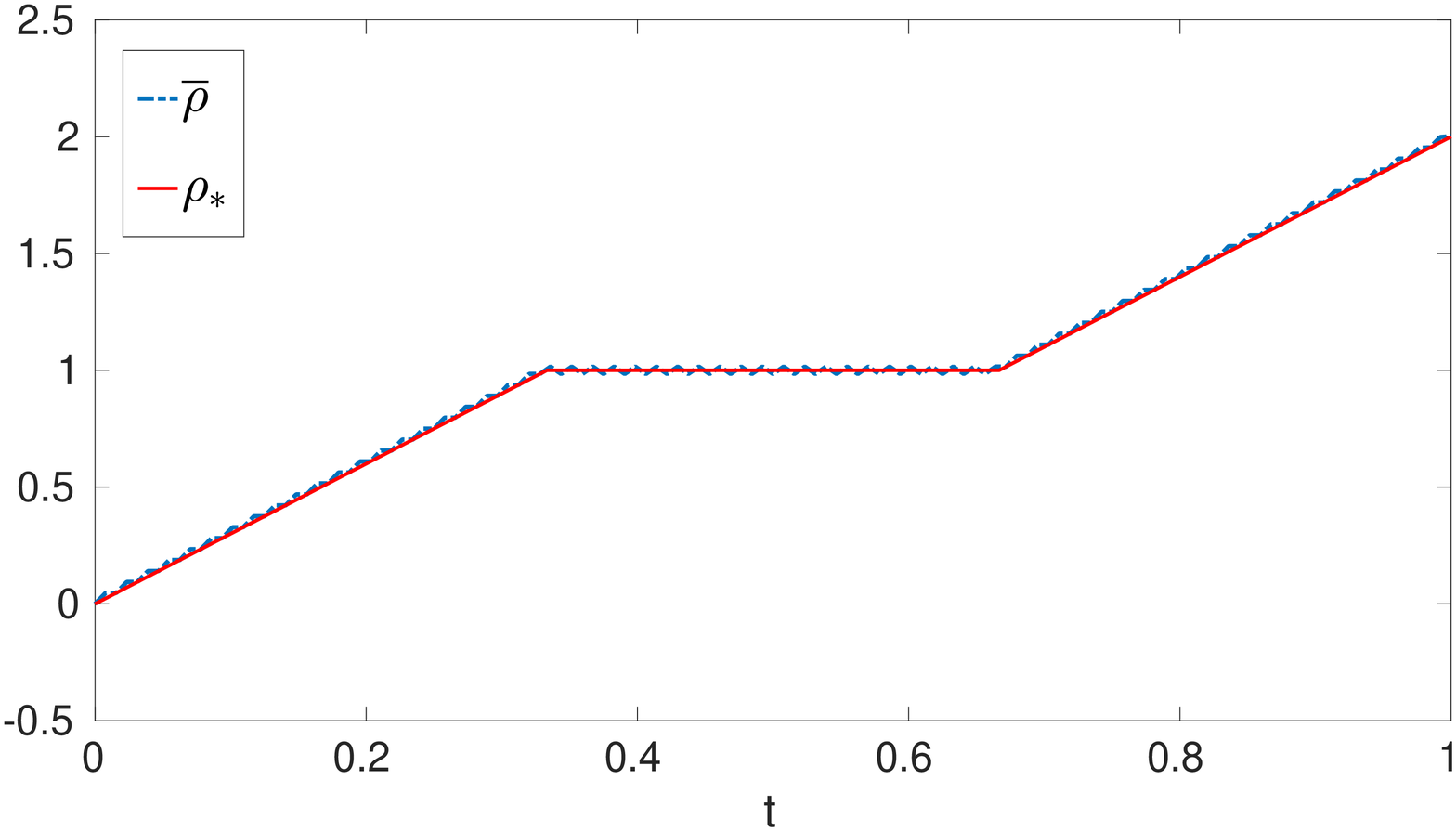}\\
\caption{True solutions $\rho_*$ and their numerical reconstructions $\ov\rho$ obtained by Algorithm \ref{algo-itr} with noiseless data. Left: The smooth true solution in \eqref{eq-true1}. Right: The non-smooth true solution in \eqref{eq-true2}.}\label{fig-noiseless}
\end{figure}

With the noiseless data, it turns out from Figure \ref{fig-noiseless} that Algorithm \ref{algo-itr} produces surprisingly accurate approximations to the respective true solutions which are almost indistinguishable from each other. However, though we do not add any artificial noise, there should exist certain error in the numerical solution of the forward problems. In view of the multiple logarithmic stability asserted in Theorem \ref{thm-stab-log}, such tiny error in the observation data may result in tremendous error in the reconstruction. Therefore, the above numerical results suggest the possibility to improve the theoretical stability of Problem \ref{prob-isp}. On the other hand, we also observe an interesting monotonicity property in the iteration. More precisely, the sequences $\{\rho_m\}_{m=0}^\infty$ generated by Algorithm \ref{algo-itr} are always monotonically increasing and converge to $\rho_*$ in both cases of \eqref{eq-true1} and \eqref{eq-true2}. This phenomenon is demonstrated by Figure \ref{fig-monotone}, where we plot the first four steps in the iteration of both cases with different fractional orders $\al$. If one can rigorously show the monotonicity of $\{\rho_m\}$ under certain conditions, then the accuracy of numerical results can be explained by the Lipschitz stability stated in Theorem \ref{thm-stab}(a), because $\rho_{m+1}-\rho_m$ ($m=0,1,\ldots$) does not change sign. Unfortunately, we could not prove such a monotonicity property, and the problem still remains open.
\begin{figure}[htbp]\centering
\includegraphics[trim=3cm 5mm 35mm 15mm,clip=true,width=.47\textwidth]{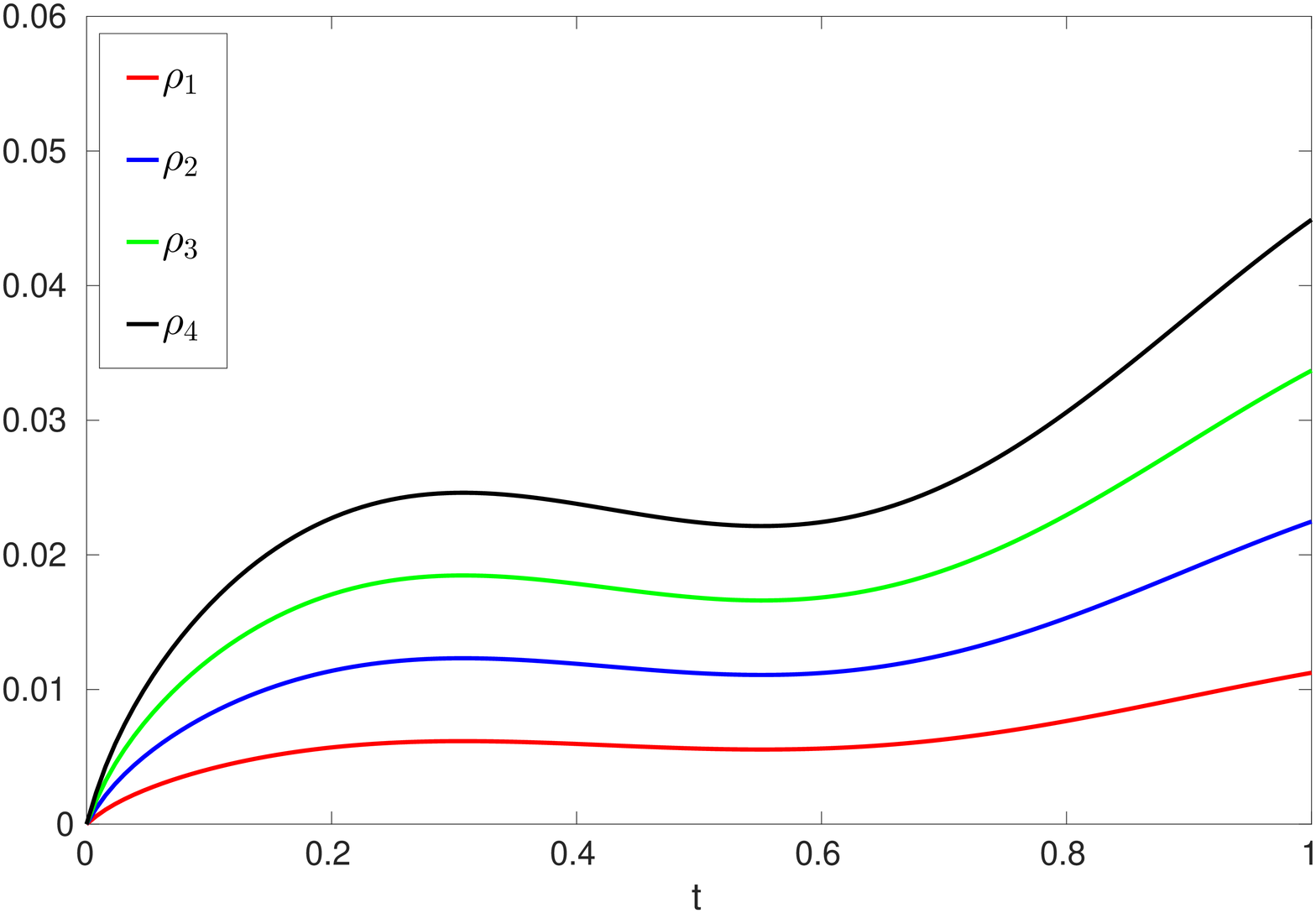}\qquad
\includegraphics[trim=25mm 5mm 35mm 15mm,clip=true,width=.47\textwidth]{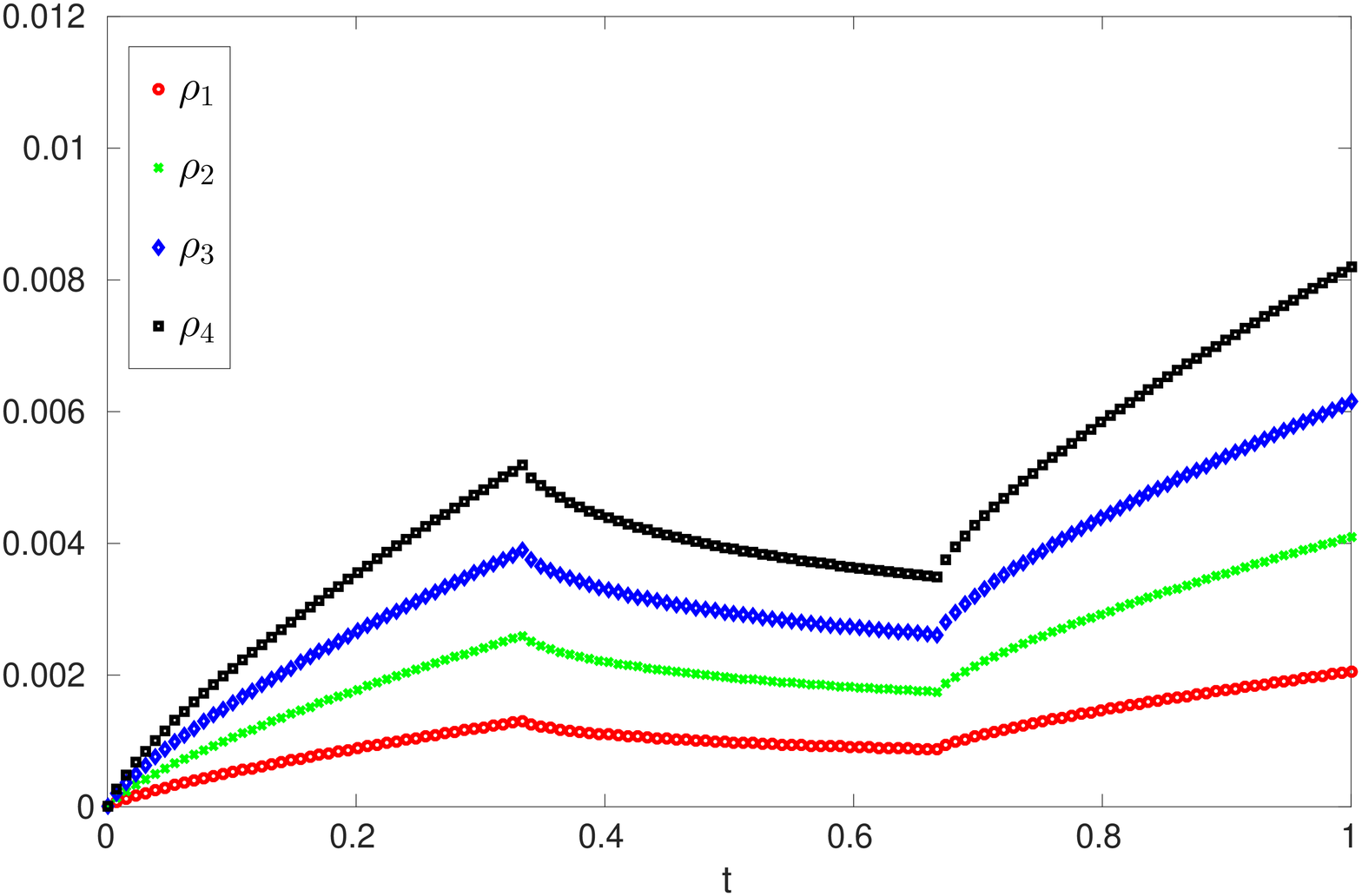}\\
\caption{The first four steps in the iteration generated by Algorithm \ref{algo-itr} with noiseless data. Left: The smooth true solution in \eqref{eq-true1} with $\al=0.5$. Right: The non-smooth true solution in \eqref{eq-true2} with $\al=0.3$.}\label{fig-monotone}
\end{figure}

Next, we proceed to deal with the noisy data for the practical purpose. Although the theoretical stability was discussed in an $L^1(0,T)$ framework, here for simplicity we just add uniform random noises to the noiseless data to produce the noisy data as
\[w_\si(t)=u_*(t)+\si\|u_*\|_{C[0,T]}\,\mathrm{rand}(-1,1),\]
where $\si>0$ is the relative noise level and $\mathrm{rand}(-1,1)$ denotes the random number uniformly distributed in $[-1,1]$. According to the iterative update \eqref{eq-itr}, the oscillation in $w_\si$ will definitely cause huge error in computing its Caputo derivative to obtain $\rho_1$, and further influence the subsequent numerical differentiation appearing in the iteration. To this end, we have to slightly reform Algorithm \ref{algo-itr} in the following way. For $f\in C[0,T]$ and $\ep\in(0,T)$, define the extension $\wt f$ of $f$ on $[-\ep,T+\ep]$ as
\[\wt f(t):=\begin{cases}
2\,f(0)-f(-t), & -\ep\le t<0,\\
f(t), & 0\le t\le T,\\
2\,f(T)-f(2T-t), & T<t\le T+\ep.
\end{cases}\]
Then we can introduce the mollification
\[f^\ep(t):=\int_{t-\ep}^{t+\ep}\ze_\ep(t-s)\,\wt f(s)\,\rd s,\quad0\le t\le T,\]
where the mollifier $\ze_\ep$ is defined by
\[\ze_\ep(t):=\left\{\!\begin{alignedat}{2}
& \f{15}{16\ep}\left(1+\f t\ep\right)^2\left(1-\f t\ep\right)^2, & \quad & |t|\le\ep,\\
& 0, & \quad & |t|>\ep.
\end{alignedat}\right.\]
Now it is obviously that $\ze_\ep\ge0$, $\int_{-\infty}^\infty \ze_\ep(t)\,\rd t=1$ and $f^\ve\in C^1[0,T]$. Replacing $u_*$ and $\rho_m$ by $w_\si^\ep$ and $\rho_m^\ep$ respectively in \eqref{eq-itr}, we arrive at the regularized iteration
\begin{equation}\label{eq-itr-reg}
\rho^\epsilon_{m+1}(t)=\left\{\!\begin{alignedat}{2}
& \f1K\pa_t^\al w_\si^\ep(t), & \quad & m=0,\\
& (\rho_1^\epsilon+\rho_m^\epsilon)(t)-\f1K\int_0^t(\rho_m^\ep)'(s)\,v(x_0,t-s)\,\rd s, & \quad & m=1,2,\ldots.
\end{alignedat}\right.
\end{equation}

Substituting \eqref{eq-itr} with \eqref{eq-itr-reg} in Algorithm \ref{algo-itr}, again we evaluate the numerical performance with the true solutions \eqref{eq-true1} and \eqref{eq-true2} and different noise levels $\si$. Here we take $\ep=\f5{128}$ in the mollifier $\ze_\ep$, where $\f1{128}$ is the step length in time in the numerical discretization. The comparisons of true solutions with their reconstructions are shown in Figure \ref{fig-noise}. In all examples, the regularized iteration \eqref{eq-itr-reg} terminates within $40$ steps.
\begin{figure}[htbp]\centering
\includegraphics[trim=3cm 2mm 3cm 1cm,clip=true,width=.47\textwidth]{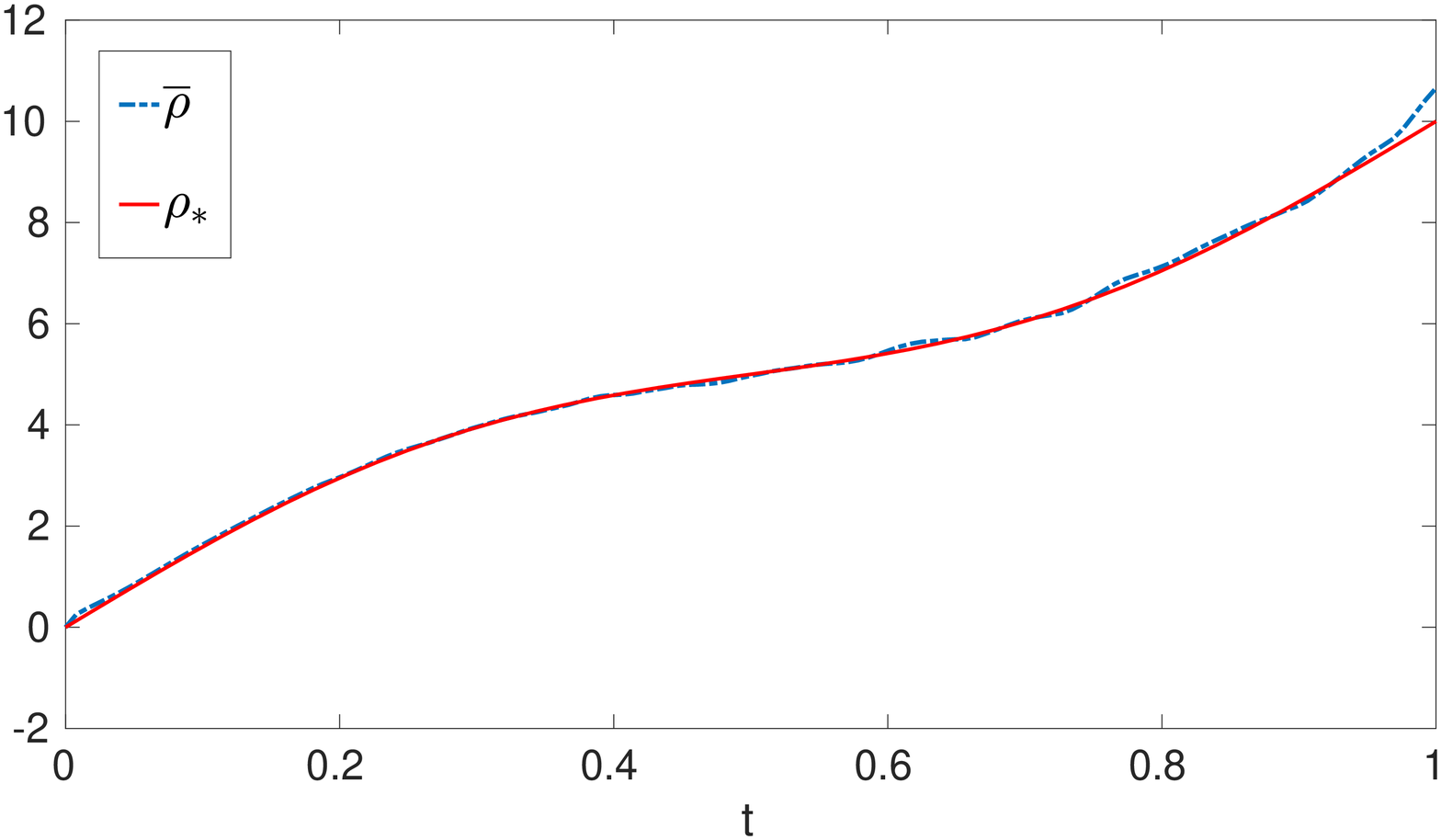}
	\qquad
\includegraphics[trim=3cm 2mm 3cm 1cm,clip=true,width=.47\textwidth]{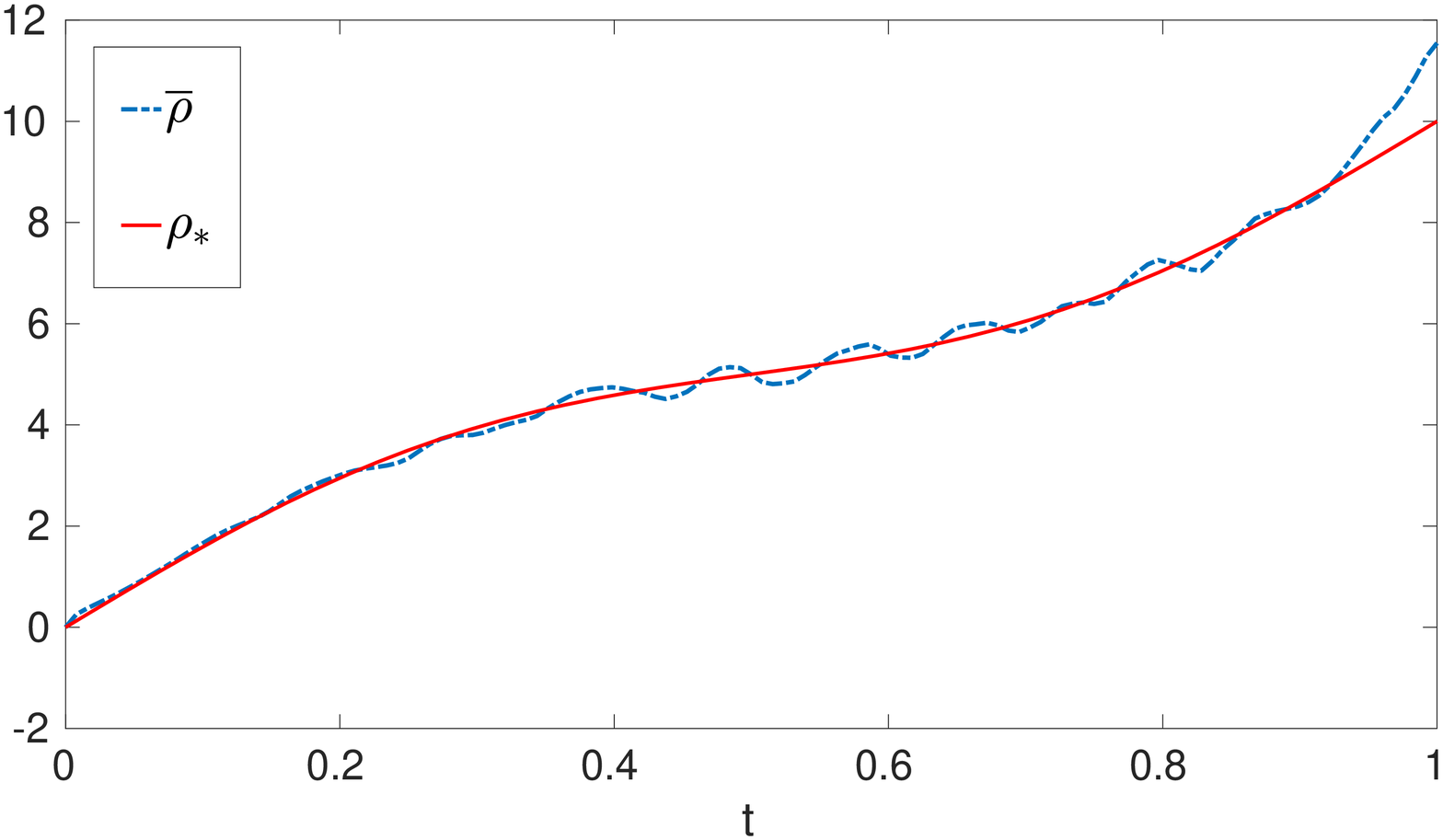}	\\[5mm]
\includegraphics[trim=3cm 2mm 3cm 1cm,clip=true,width=.47\textwidth]{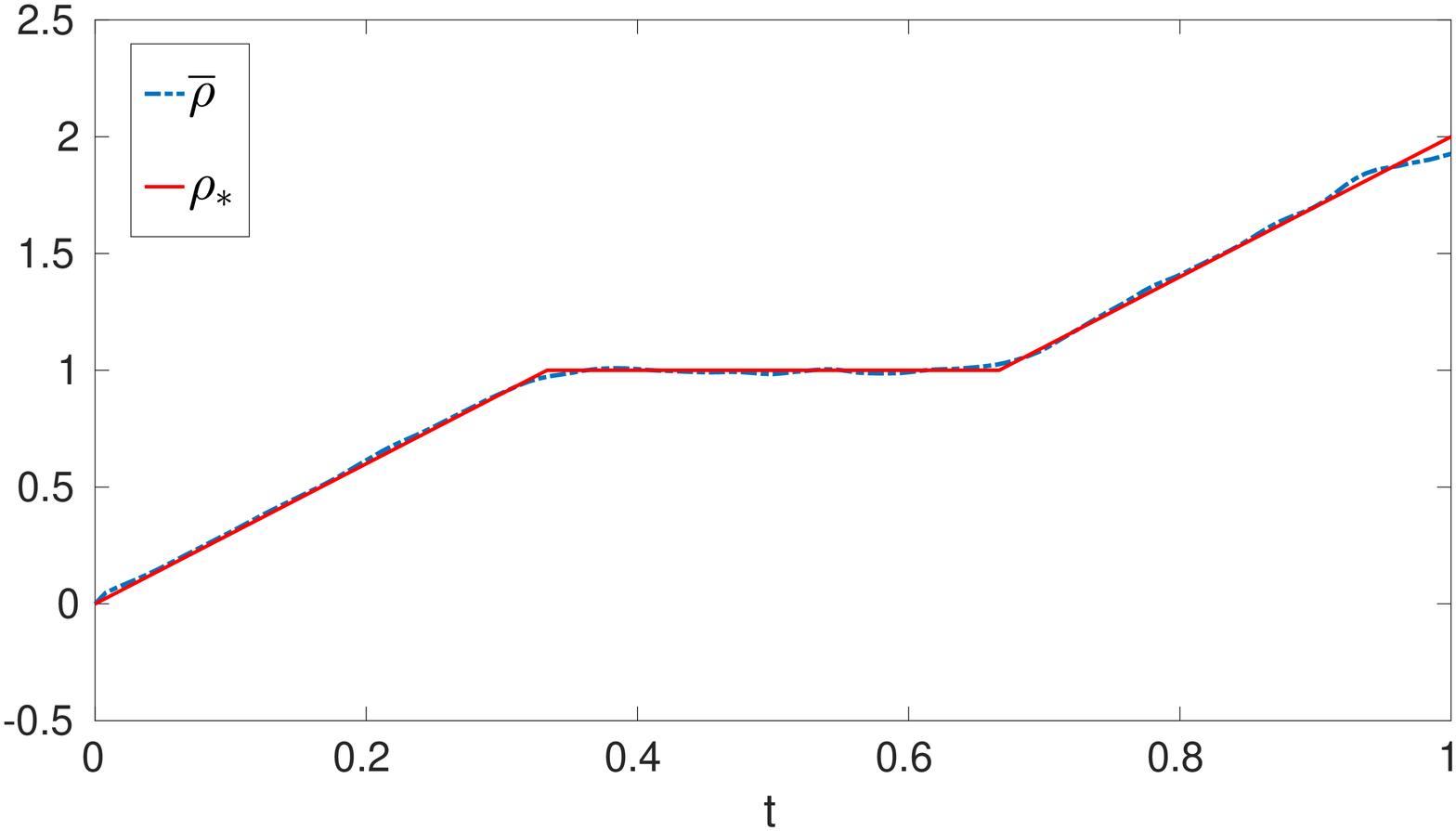}\qquad
\includegraphics[trim=3cm 2mm 3cm 1cm,clip=true,width=.47\textwidth]{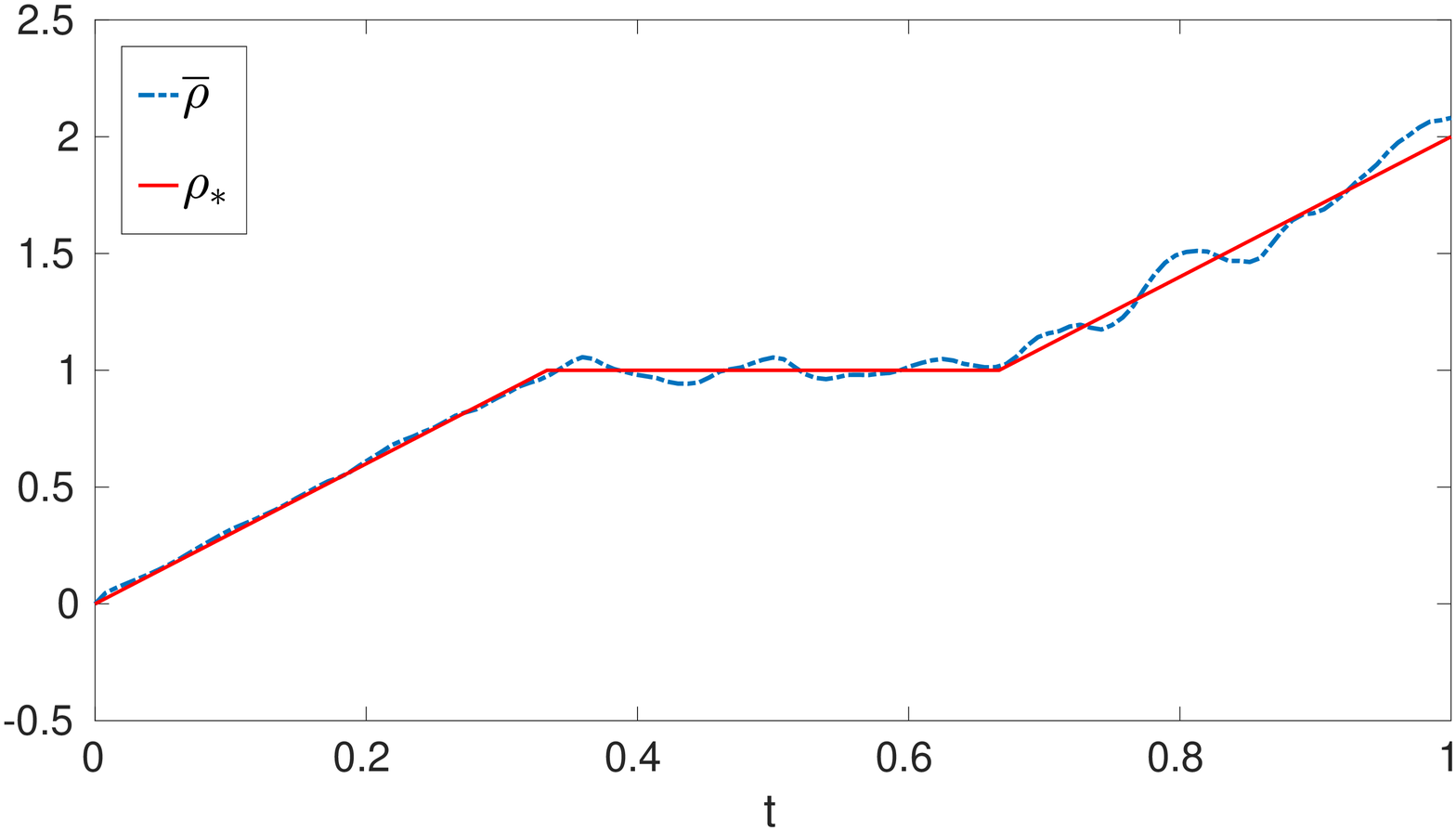}
\\
\caption{True solutions $\rho_*$ and their numerical reconstructions $\ov\rho$ obtained by the regularized iteration \eqref{eq-itr-reg} with noisy data. Upper row: The smooth true solution in \eqref{eq-true1}. Bottom row: The non-smooth true solution in \eqref{eq-true2}. Left column: Noise level $\si=1\%$. Right column: Noise level $\si=5\%$.}\label{fig-noise}
\end{figure}

It reveals from Figure \ref{fig-noise} that the simple mollification method effectively reduces the ill-posedness of the numerical differentiation in the iteration. Moreover, we can observe that all of the reconstructed solutions $\ov\rho(t)$ match the true solutions more accurately for small $t$. In the rear part of the time, the error tends to increase with respect to $t$. This phenomenon provides evidence for the necessity of an extra observation interval $(T-\de,T)$ in Theorem \ref{thm-stab}, which indicates the increasing instability near $T$.

\Section{Conclusions}\label{sec-concl}

From both theoretical and numerical aspects, in this paper we study the inverse problem on determining $\rho(t)$ in the (time-fractional) diffusion equation
\[(\pa_t^\al-\tri)u(x,t)=\rho(t)g(x)\quad(0<\al\le1)\]
by the observation taken along $\{x_0\}\times(0,T)$. In existing literature, the majority only dealt with the case of $x_0\in\supp\,g$, in which the problem turned out to be moderately ill-posed. If $x_0\not\in\supp\,g$, it remains a long-standing open problem even for $\al=1$ in spite of its practical significance. Motivated by \cite{STY02}, we restrict sign changes of $\rho$ to be finite, and require an extra interval of observation data. Based on the reverse convolution inequality and the lower bound of positive solutions, we prove stability results for the problem in Theorems \ref{thm-stab} and \ref{thm-stab-log}. Especially, we establish the multiple logarithmic stability for the case of $x_0\not\in\supp\,g$, which, as far as the authors know, seems to be the first affirmative answer to the problem regardless of its weakness.

Numerically, we develop a fixed point iteration for the reconstruction (see Algorithm \ref{algo-itr}), whose convergence is guaranteed by Theorem \ref{thm-cov}. In order to treat noisy data, we apply the mollification method to stabilize the numerical differentiation involved in the iteration. The efficiency and accuracy of our approach are demonstrated by several numerical experiments.

As a representative, we only consider the simplest (time-fractional) parabolic operator $\pa_t^\al-\tri$ in this paper, but most probably our argument also works for more general formulations, e.g., replacing $\tri$ by a uniformly non-degenerate elliptic operator. To this end, we shall investigate the corresponding fundamental solutions to give lower estimates for positive solutions. Other future topics include the improvement of theoretical stability under certain assumptions as well as the monotonicity property of the proposed iteration method.

\bigskip

{\bf Acknowledgement}\ \ This work is supported by Grant-in-Aid for Scientific Research (S) 15H05740, Japan Society for the Promotion of Science (JSPS). The first author is supported by JSPS Postdoctoral Fellowship for Overseas Researchers, Grant-in-Aid for JSPS Fellows 16F16319 and A3 Foresight Program ``Modeling and Computation of Applied Inverse Problems'', JSPS. The second author is supported by NSF Grant DMS-1620138.

\bibliographystyle{abbrv}
\bibliography{inverse_source}

\end{document}